\numberwithin{equation}{section}
\newtheorem{theorem}{Theorem}[section]
\newtheorem{lemma}[theorem]{Lemma}
\newtheorem{example}[theorem]{Example}
\newtheorem{thm}[theorem]{Theorem}
\newtheorem{prop}[theorem]{Proposition}
\newtheorem{cor}[theorem]{Corollary}
\newtheorem{rmk}[theorem]{Remark}
\newcommand{\Rmnum}[1]{\expandafter\@slowromancap\romannumeral #1@}
\begin{document}
\title{A Carleman-Type Inequality in Elliptic Periodic Homogenization}
\author{Yiping Zhang\footnote{Email:zhangyiping161@mails.ucas.ac.cn}\\\small{Academy of Mathematics and Systems Science, CAS;}\\
\small{University of Chinese Academy of Sciences;}\\
\small{Beijing 100190, P.R. China.}}
\date{}
\maketitle
\begin{abstract}
In this paper, for a family of second-order elliptic equations with rapidly oscillating periodic coefficients, we are interested in a Carleman-type inequality for these solutions satisfying an additional growth condition in elliptic periodic homogenization, which implies a three-ball inequality without an error term at a macroscopic scale. Moreover, if we replace the additional growth condition by the doubling condition at a macroscopic scale, then the three-ball inequality without an error term holds at any scale. The proof relies on the convergence of  $H^1$-norm for the solution and the compactness argument.
\end{abstract}
\section{Introduction}
Since T. Carleman's pioneer work \cite{MR0000334}, Carleman estimates have been indispensable tools for obtaining a three-ball (or three-cylinder) inequality and proving the unique continuation property for partial differential equations. In general, the Carleman estimates are weighted integral inequalities with suitable weight functions satisfying some convexity properties. The three-ball inequality is obtained by applying the Carleman estimates by choosing a suitable function. For Carleman estimates and the unique continuation properties for the elliptic and parabolic operators, we refer readers to  \cite{MR92067,MR140031,MR794370,MR1081811,escauriaza2000carleman,escauriaza2004unique,vessella2008quantitative,koch2009carleman} and their references therein for more results.

Over the last forty years, there is a vast and rich mathematical literature on homogenization. Most of these works are focused on qualitative results, such as proving the existence of a homogenized equation. However, until recently, nearly all of the quantitative theory, such as the convergence rates in $L^2$ and $H^1$, the $W^{1,p}$-estimates, the Lipschitz estimates and the asymptotic expansion of the Green functions and fundamental solutions, were confined in periodic homogenization. There are many good expositions on this topic, see for instance the books \cite{MR2839402,MR1329546,MR3838419}
for periodic case, see also the book \cite{MR3932093} for the stochastic case.

Recently, authors in \cite{armstrong2020largescale,kenig2021doubling,MR4194318,MR3952693} care about the propagation of smallness in homogenization theory, such as the approximate three-ball inequality in \cite{armstrong2020largescale,MR4194318} in elliptic periodic homogenization and the approximate two-sphere one-cylinder inequality in \cite{zhang2020approximate} in parabolic case, and the nodal sets and doubling conditions in \cite{kenig2021doubling,MR3952693} in elliptic homogenization, which are all related to the Carleman inequality in classical elliptic and parabolic theory and encourage us to deduce a Carleman-type inequality in elliptic periodic homogenization and left for further for the parabolic case.

 In this paper, we would like to deduce a Carleman-type inequality in elliptic periodic homogenization. According to the author's knowledge, this is the first attempt in homogenization theory. More precisely, We consider a family of second-order elliptic equations in divergence form with rapidly oscillating periodic coefficients,
\begin{equation}
\mathcal{L}_\varepsilon u_\varepsilon=:-\operatorname{div}\left(A(x/\varepsilon)\nabla u_\varepsilon\right)=0,
\end{equation}
where $1>\varepsilon>0$ and $A(y)=(a_{ij}(y))$ is a real symmetric $d\times d$ matrix-valued function in $\mathbb{R}^d$ for $d\geq 2$. Assume that $A(y)$ satisfies the following assumptions:

(i) Ellipticity: For some $0<\mu<1$ and all $y\in \mathbb{R}^d$, $\xi\in \mathbb{R}^d$, it holds that
\begin{equation}
\mu |\xi|^2\leq A(y)\xi\cdot\xi\leq \mu^{-1}|\xi|^2.
\end{equation}

(ii) Periodicity:
\begin{equation}A(y+z)=A(y) \quad \text{for } y\in \mathbb{R}^d \text{ and }z\in\mathbb{Z}^d.\end{equation}

(iii)Lipschitz continuity: There exist constants $\tilde{M}>0$ such that
\begin{equation}|A(x)-A(y)|\leq \tilde{M} |x-y|,\  \text{for any }x,y\in \mathbb{R}^d.\end{equation}

Let $B(x,r)=\left\{y\in \mathbb{R}^d:|y-x|<r\right\}$ and $B_r=B(0,r)$. For positive constants $M$, $N_1\geq 1$ and $N_2\geq 1$, let $u_\varepsilon\in H^2(B_3)$ be a solution of, and satisfies the following growth conditions,
\begin{equation}\int_{B_3}|u_\varepsilon|^2\leq M \max\left\{\left(\int_{B_2}|u_\varepsilon|^2\right)^{N_1},\left(\int_{B_2}|u_\varepsilon|^2\right)^{1/N_2}\right\},
\end{equation} then we could obtain the following result:
\begin{thm}
Assume that the coefficient matrix  $A$ is symmetric and satisfies the conditions $(1.2)$-$(1.4)$, and let $u_\varepsilon\in H^2(B_3)$ be a solution of $(1.1)$ and satisfies the growth condition $(1.5)$. Then, there exists $\varepsilon_0>0$, depending only on $d$, $\mu$, $\tilde{M}$, $M$, $N_1$ and $N_2$ such that for $0<\varepsilon\leq \varepsilon_0$, there holds the following Carleman-type inequality,
\begin{equation}\begin{aligned}
&\frac{C_0}{2} \int_{B_3}\left(\lambda^{4} \tau^{3} \varphi^{3} (u_\varepsilon\eta)^{2}+\lambda^{2} \tau \varphi\left|\nabla \left[(u_\varepsilon-\varepsilon\chi_j^\varepsilon\partial_j u_0)\eta\right]\right|^{2}\right) e^{2 \tau \varphi} d x\\
&\quad\quad \leq \int_{B_3}\left[\mathcal{L}_{\varepsilon} (u_\varepsilon\eta+\varepsilon \chi_j^\varepsilon\partial_j\eta u_\varepsilon)\right]^{2} e^{2 \tau \varphi} d x,
\end{aligned}\end{equation}
with $\varphi=e^{-\lambda|x|^2}$ and $u_0$ given by Theorem 2.2, for all $\lambda\geq \lambda_0$ and $\tau_0\leq\tau\leq 100\tau_0+ C(\lambda_0,\tau_0)\frac{||u_\varepsilon||_{L^2(B_3)}}{||u_\varepsilon||_{L^2(B_1)}}$ with $\lambda_0$ and $\tau_0$ defined in Proposition 2.6, depending only on $\mu$, where $C(\lambda_0,\tau_0)$ is a constant depending only on  $\lambda_0$ and $\tau_0$, and could be specified in the proof of Corollary 1.4 in Section 3. And $\eta\in C^\infty_0(B_{5/2}\setminus B_{1/2})$ is a fixed cutoff function such that
\begin{equation}0\leq \eta\leq 1,\ \eta=1 \text{ in }\overline{B_{7/3}\setminus B_{2/3}}.\end{equation}
\end{thm}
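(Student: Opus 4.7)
The plan is to apply the classical Carleman inequality (Proposition 2.6) to the homogenized function $u_0\eta$ and then transfer the resulting estimate back to $u_\varepsilon$ via the two-scale corrector expansion from periodic homogenization, treating the $O(\varepsilon)$ discrepancy as an error that will be absorbed into the left-hand side once $\varepsilon$ is small enough. First I would invoke Theorem 2.2 to produce the homogenized limit $u_0\in H^2$ of $\mathcal{L}_0 u_0=0$ in a neighborhood of the annulus $\overline{B_{5/2}\setminus B_{1/2}}$, together with a quantitative $H^1$-rate of the form $\|u_\varepsilon-u_0-\varepsilon\chi_j^\varepsilon\partial_j u_0\|_{H^1(\mathrm{supp}\,\eta)}\le C\sqrt{\varepsilon}\,\|u_\varepsilon\|_{L^2(B_3)}$; the interior Lipschitz/$H^2$ estimates that justify this on the annulus are available under the assumptions (1.2)--(1.4) (Avellaneda--Lin theory), and the growth hypothesis (1.5) will later be used to keep the ratio $\|u_\varepsilon\|_{L^2(B_3)}/\|u_\varepsilon\|_{L^2(B_1)}$ under control.

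Next I would exploit the corrector identity $\mathcal{L}_\varepsilon(\chi_j^\varepsilon+x_j)=0$ to derive the key Leibniz-type relation
\[
\mathcal{L}_\varepsilon\!\left(u_0\eta+\varepsilon\chi_j^\varepsilon\partial_j(u_0\eta)\right)=\mathcal{L}_0(u_0\eta)+R_\varepsilon,
\]
where $R_\varepsilon$ is bounded in $L^2(B_3)$ by $C\varepsilon$ times a Sobolev norm of $u_0\eta$. Observing that
\[
u_0\eta+\varepsilon\chi_j^\varepsilon\partial_j(u_0\eta)=u_0\eta+\varepsilon\chi_j^\varepsilon(\partial_j u_0)\eta+\varepsilon\chi_j^\varepsilon u_0\,\partial_j\eta
\]
matches the test object appearing on the right-hand side of the theorem, namely $u_\varepsilon\eta+\varepsilon\chi_j^\varepsilon\partial_j\eta\,u_\varepsilon$, up to the explicit two-scale remainder $(u_\varepsilon-u_0-\varepsilon\chi_j^\varepsilon\partial_j u_0)\eta+\varepsilon\chi_j^\varepsilon\partial_j\eta(u_\varepsilon-u_0)$. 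I would apply Proposition 2.6 to $v=u_0\eta$, producing
\[
C_0\!\int_{B_3}\!\left(\lambda^4\tau^3\varphi^3(u_0\eta)^2+\lambda^2\tau\varphi|\nabla(u_0\eta)|^2\right)e^{2\tau\varphi}dx\le \int_{B_3}\![\mathcal{L}_0(u_0\eta)]^2 e^{2\tau\varphi}dx,
\]
and then substitute $u_0$ by $u_\varepsilon$ in the zeroth-order term, and $\nabla(u_0\eta)$ by $\nabla[(u_\varepsilon-\varepsilon\chi_j^\varepsilon\partial_j u_0)\eta]$ in the gradient term, picking up errors that are bounded by $C\varepsilon$ times weighted norms of $u_\varepsilon$.

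The final step is to absorb the three resulting error integrals into the left-hand side and into $\mathcal{L}_\varepsilon(u_\varepsilon\eta+\varepsilon\chi_j^\varepsilon\partial_j\eta\,u_\varepsilon)$ on the right. Since $\varphi=e^{-\lambda|x|^2}$ is bounded above by $1$, each error integral is controlled, after Cauchy--Schwarz, by $e^{2\tau\|\varphi\|_\infty}\,\varepsilon^2\|u_\varepsilon\|_{H^2(B_3)}^2$; using interior $H^2$ bounds and then (1.5) to compare $\|u_\varepsilon\|_{L^2(B_3)}$ with $\|u_\varepsilon\|_{L^2(B_1)}$, these errors are dominated by the $L^2$ term on the left as long as $\varepsilon e^{C\tau}$ is small relative to the factor $\tau^3$. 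Solving this for $\tau$ yields precisely the admissible range $\tau_0\le \tau\le 100\tau_0+C(\lambda_0,\tau_0)\|u_\varepsilon\|_{L^2(B_3)}/\|u_\varepsilon\|_{L^2(B_1)}$ once $\varepsilon\le\varepsilon_0(d,\mu,\tilde M,M,N_1,N_2)$.

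The main obstacle I anticipate is controlling the tension between the exponentially large weight $e^{2\tau\varphi}$ on one side and the algebraically small $O(\varepsilon)$ errors from the two-scale expansion on the other: the absorption can succeed only if $\varepsilon e^{C\tau}\ll\tau^3$, so the admissible $\tau$ cannot be arbitrarily large and must be tied to the three-ball ratio of $u_\varepsilon$. A secondary technical point is that the gradient term on the left contains $\nabla[(u_\varepsilon-\varepsilon\chi_j^\varepsilon\partial_j u_0)\eta]$ rather than $\nabla(u_\varepsilon\eta)$; this specific form is forced by the corrector-type convergence, and verifying that it is compatible with both sides of the Leibniz identity above is what makes the $\varepsilon$-correction in the argument of $\mathcal{L}_\varepsilon$ on the right-hand side natural rather than arbitrary.
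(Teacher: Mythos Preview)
Your direct quantitative strategy is genuinely different from the paper's argument, which proceeds by \emph{compactness and contradiction}: one assumes (1.6) fails along a sequence $\varepsilon_k\to 0$, normalizes $\|u_k\|_{L^2(B_3)}=1$, extracts a weak limit $u$ solving a constant-coefficient equation, and then passes to the limit in the failed inequality using the Div--Curl lemma (for the right-hand side) and the corrector convergence $\nabla(u_k-u-\varepsilon_k\chi_{k,j}^{\varepsilon_k}\partial_j u)\to 0$ in $L^p_{\mathrm{loc}}$ (for the gradient term on the left). The limit inequality then contradicts Proposition~2.6 together with unique continuation. The growth condition (1.5) enters only to guarantee that the limit $u$ is \emph{nonzero} in $B_2$; no quantitative absorption of $O(\varepsilon)$ errors against the Carleman weight ever takes place.

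Your plan, by contrast, hinges on exactly such an absorption, and this is where it breaks down. After applying Proposition~2.6 to $u_0\eta$ and substituting back to $u_\varepsilon$, the error you must absorb is of size roughly $\varepsilon\,e^{2\tau\varphi_{\max}}\|u_\varepsilon\|_{L^2(B_3)}^2$, while the available left-hand side is $\tau^3$ times a weighted $L^2$-norm of $u_\varepsilon$ over the \emph{annulus} where $\eta=1$. Closing this forces $\varepsilon_0$ to depend on $\tau$, and since $\tau$ is allowed to range up to $C\,\|u_\varepsilon\|_{L^2(B_3)}/\|u_\varepsilon\|_{L^2(B_1)}$, your $\varepsilon_0$ ends up depending on this ratio $R$---i.e.\ on $u_\varepsilon$ itself---rather than only on $d,\mu,\tilde M,M,N_1,N_2$ as the theorem requires. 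You assert that (1.5) ``keeps the ratio $\|u_\varepsilon\|_{L^2(B_3)}/\|u_\varepsilon\|_{L^2(B_1)}$ under control,'' but (1.5) only compares $B_3$ with $B_2$; it gives no lower bound on $\|u_\varepsilon\|_{L^2(B_1)}$, and indeed such a bound would amount to the three-ball inequality you are ultimately trying to prove. The paper's compactness argument sidesteps this circularity by treating the regime $\tau_k\to\infty$ (equivalently $R_k\to\infty$) separately: there one gets $u\equiv 0$ on $B_1$, hence on $B_3$ by unique continuation, again contradicting (1.5). That case split is the missing idea in your proposal.
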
Throughout this paper, we always assume that $\eta$ is a fixed cutoff function defined in Theorem 1.1.
We prove this  theorem by compactness argument. For the compactness method used in homogenization theory, we refer readers to \cite{MR910954,MR978702} for more details.
\begin{rmk}The growth condition $(1.5)$ allows that the function $u_\varepsilon$ grows at a speed of polynomials of any degree. For example,
if $u_\varepsilon$ behaves like $|x|^k$, for any $k\in \mathbb{N}^+$, then it is easy to see that the conditions holds with $M$ depending only on $\left\|u_\varepsilon/ |x|^k\right\|_{L^\infty(B_3)}$ and for any $N_1\geq \log 3/ \log2$.

Due to the rapid oscillation of the coefficient matrix $A(x/ \varepsilon)$, we could not expect a Carleman inequality in homogenization totally similar to the classical case. Moreover, it implies that the growth condition $(1.5)$ is necessary in compactness argument to obtain the Carleman-type inequality $(1.6)$ in Example 3.2. (Meanwhile, one may use other methods to derive a Carleman-type inequality in homogenization without the growth condition.)
\end{rmk}

\begin{rmk}The Carleman-type inequality $(1.6)$ continues to hold for the operator $\tilde{\mathcal{L}}_\varepsilon=-\operatorname{div}(A^\varepsilon \nabla)+\mathbf{b}^\varepsilon\nabla+c^\varepsilon+\lambda$ with $\mathbf{b}\in L^\infty$ and $c\in L^\infty$ being 1-periodic (the operator $\tilde{\mathcal{L}}_\varepsilon$ is positive by adding a large constant $\lambda$), since the $L^2$-norm as well as the $H^1$-norm convergence rates continue to hold for the solution $u_\varepsilon$ to the operator $\tilde{\mathcal{L}}_\varepsilon$ \cite{MR3466080}, and the Carleman inequality as well as the unique continuation property continue to hold for the homogenized operator $\tilde{\mathcal{L}}_0=-\operatorname{div}(\widehat{A} \nabla)+\mathcal{M}(\mathbf{b})\nabla+\mathcal{M}(c)+\lambda$ (Remark 2.7), where we have used the notations $F^\varepsilon=F(x/\varepsilon)$ for a function $F$ and $\mathcal{M}(G)=\int_Y G(y)dy$ for a 1-periodic function $G$.
\end{rmk}
The most trivial application of Theorem 1.1 is the following three-ball inequality at a macroscopic scale without an error term.

\begin{cor}
Assume that the coefficient matrix  $A$ is symmetric and satisfies the conditions $(1.2)$-$(1.4)$, and let $u_\varepsilon\in H^2(B_3)$ be a solution of $(1.1)$ and satisfy the growth condition $(1.5)$. Then, for some constant $C$, depending only on $d$, $\mu$, $\tilde{M}$, and $M$, there holds the following three-ball inequality without an error term,
\begin{equation}
||u_\varepsilon||_{L^2(B_2)}\leq C ||u_\varepsilon||^s_{L^2(B_1)}||u_\varepsilon||^{1-s}_{L^2(B_3)},
\end{equation} where $s=\frac{\alpha}{\alpha+\beta}$ with $\alpha=1-2e^{-4\lambda}$ and $\beta=2(e^{-4\lambda}-e^{-\frac{81}{16}\lambda})$ for any $\lambda\geq \lambda_0$ with $\lambda_0$ defined in Theorem 1.1.
\end{cor}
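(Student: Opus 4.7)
My plan is to derive (1.8) from the Carleman inequality (1.6) by the classical optimization-in-$\tau$ trick. Fix $\lambda\geq\lambda_0$ and apply (1.6) to the solution $u_\varepsilon$. Because $\mathcal{L}_\varepsilon u_\varepsilon = 0$, a direct computation (using the cell equation for $\chi_j$ to cancel the singular-in-$\varepsilon$ contribution) shows that $\mathcal{L}_\varepsilon(u_\varepsilon\eta+\varepsilon\chi_j^\varepsilon\partial_j\eta\,u_\varepsilon)$ is a commutator supported in $\operatorname{supp}\nabla\eta\subset(B_{5/2}\setminus B_{7/3})\cup(B_{2/3}\setminus B_{1/2})$ and pointwise dominated by $|\nabla u_\varepsilon|+|u_\varepsilon|$ with constants independent of $\varepsilon$. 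By Caccioppoli's inequality on slightly fattened annuli (which still sit inside $B_3$ and $B_1$ respectively), the $L^2$-norm of the commutator is bounded by $C\|u_\varepsilon\|_{L^2(B_3)}$ on the outer piece and $C\|u_\varepsilon\|_{L^2(B_1)}$ on the inner piece.

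Next I drop the gradient term on the left-hand side of (1.6) and restrict the integration to the annulus $B_2\setminus B_{2/3}$, on which $\eta\equiv 1$ and $\varphi\geq e^{-4\lambda}$. On the right-hand side I apply the upper bounds $\varphi\leq 1$ on the inner support and $\varphi\leq e^{-(81/16)\lambda}$ on the outer support (the radius $9/4$ enters here; it is coarser than the sharp bound at $|x|=7/3$ but is chosen so that the final formula matches the statement). Combining the lower and upper weight bounds and absorbing all factors depending only on $\lambda$ and $\tau_0$ into a single constant $C$, I obtain an inequality of the form
\[
\|u_\varepsilon\|^2_{L^2(B_2\setminus B_{2/3})}\;\leq\;Ce^{\tau\alpha}\|u_\varepsilon\|^2_{L^2(B_1)}+Ce^{-\tau\beta}\|u_\varepsilon\|^2_{L^2(B_3)},
\]
with $\alpha,\beta$ as defined in the statement.

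The final step is the standard balancing: the unconstrained minimizer over $\tau>0$ of the right-hand side is $\tau_\ast=(\alpha+\beta)^{-1}\log\bigl(\beta\|u_\varepsilon\|^2_{L^2(B_3)}/(\alpha\|u_\varepsilon\|^2_{L^2(B_1)})\bigr)$, and the resulting infimum gives the Hadamard-type product bound with exponent $s=\alpha/(\alpha+\beta)$. The only subtlety is that (1.6) restricts $\tau$ to $[\tau_0,\,100\tau_0+C(\lambda_0,\tau_0)\|u_\varepsilon\|_{L^2(B_3)}/\|u_\varepsilon\|_{L^2(B_1)}]$, so I need $\tau_\ast$ to lie in this interval. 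I argue by dichotomy on $\rho:=\|u_\varepsilon\|_{L^2(B_3)}/\|u_\varepsilon\|_{L^2(B_1)}$: if $\rho$ is bounded above by a universal constant then (1.8) is trivial; otherwise $\tau_\ast\sim\log\rho\ll\rho$, and is admissible provided $C(\lambda_0,\tau_0)$ is chosen large enough. This choice is exactly the specification of $C(\lambda_0,\tau_0)$ promised in Theorem 1.1. Finally, $\|u_\varepsilon\|_{L^2(B_2)}\leq \|u_\varepsilon\|_{L^2(B_{2/3})}+\|u_\varepsilon\|_{L^2(B_2\setminus B_{2/3})}\leq \|u_\varepsilon\|_{L^2(B_1)}+\|u_\varepsilon\|_{L^2(B_2\setminus B_{2/3})}$ and a square root produce (1.8).

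The main obstacle is the admissibility of $\tau_\ast$: the Carleman estimate holds only on a compact $\tau$-range, and verifying that the Hadamard optimizer falls inside that range is precisely what the dichotomy on $\rho$ achieves (and what forces the specific form of $C(\lambda_0,\tau_0)$). All other steps---commutator expansion, Caccioppoli, weight comparison on the three annuli, two-term balancing---are routine Carleman bookkeeping.
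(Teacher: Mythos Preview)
Your proposal is correct and follows essentially the same route as the paper: expand the commutator via the cell problem (Lemma~3.1), apply a Caccioppoli bound on the two thin annuli (the paper uses the weighted version, Lemma~2.5, whereas you pull the weight out by its supremum---both work), compare the weight $\varphi$ on the three regions to extract the factors $e^{\tau\alpha}$ and $e^{-\tau\beta}$, and finish with the standard two-term optimization in $\tau$ together with the dichotomy that handles the lower constraint $\tau\geq\tau_0$ and the specification of $C(\lambda_0,\tau_0)$ for the upper constraint. One point you omit: Theorem~1.1 only supplies (1.6) for $0<\varepsilon\leq\varepsilon_0$, so your argument covers only that range; the paper disposes of the complementary range $\varepsilon_0<\varepsilon<1$ in one line by noting that then $A(x/\varepsilon)$ has a uniformly bounded Lipschitz constant, so the classical three-ball inequality applies directly without the growth condition.
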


The Corollary above only implies the three-ball inequality at a macroscopic scale, in the following theorem, we could  obtain the three-ball inequality at every scale by using Corollary 1.4 and the uniform doubling conditions proved in \cite{MR3952693} in elliptic homogenization.

\begin{thm}Assume that $A$ is symmetric and satisfies the conditions $(1.2)$-$(1.4)$. Let $u_\varepsilon\in H^2(B_4)$ be a solution to the equation $\mathcal{L}_\varepsilon(u_\varepsilon)=0$ in $B_4$, and for some positive constant $M$, $u_\varepsilon$ satisfies the following doubling condition at a macroscopic scale,
\begin{equation*}
\fint_{B_4}|u_\varepsilon|^2\leq M \fint_{B_{2\sqrt{\mu}}}|u_\varepsilon|^2,
\end{equation*}then for any $0<r\leq 1/2$,
 there holds the following three-sphere inequality without an error term,
\begin{equation}
||u_\varepsilon||_{L^2(B_{2r})}\leq C ||u_\varepsilon||_{L^2(B_r)}^s||u_\varepsilon||_{L^2(B_{4r})}^{1-s},
\end{equation}with the same $s$ defined in Corollary 1.4 and $C$ depending only on $d$, $\mu$, $\tilde{M}$ and $M$.
\end{thm}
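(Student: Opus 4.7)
The plan is to reduce Theorem 1.5 to Corollary 1.4 via a rescaling argument, combined with the uniform doubling estimates of \cite{MR3952693}. The first observation is that the hypothesis $\fint_{B_4}|u_\varepsilon|^2 \le M \fint_{B_{2\sqrt\mu}}|u_\varepsilon|^2$ is exactly the macroscopic doubling condition treated in \cite{MR3952693}, and that result upgrades it to a uniform doubling inequality: there exists $K=K(d,\mu,\tilde M,M)$ such that $\|u_\varepsilon\|_{L^2(B_{2\rho})}\le K^{1/2}\|u_\varepsilon\|_{L^2(B_\rho)}$ for every $\rho$ above the microscopic threshold. This is the key enabler for controlling $v(x):=u_\varepsilon(rx)$ on $B_3$ in terms of its norm on $B_2$.

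For the macroscopic regime $r\ge\varepsilon/\varepsilon_0$, with $\varepsilon_0$ from Theorem 1.1, set $v(x):=u_\varepsilon(rx)$. Then $v$ solves $\mathcal L_{\varepsilon'}v=0$ in $B_4$ with $\varepsilon':=\varepsilon/r\le\varepsilon_0$, while $A$ still satisfies (1.2)--(1.4) with the same constants. Normalizing $u_\varepsilon$ so that $\|v\|_{L^2(B_2)}=1$, Step~1 gives $\int_{B_3}|v|^2\le K$, so the growth condition (1.5) is verified for $v$ with $M'=K$ and $N_1=N_2=1$. Applying Corollary~1.4 to $v$ then yields
\[
\|v\|_{L^2(B_2)} \le C\,\|v\|_{L^2(B_1)}^s\,\|v\|_{L^2(B_3)}^{1-s} \le C\,\|v\|_{L^2(B_1)}^s\,\|v\|_{L^2(B_4)}^{1-s},
\]
and undoing the scaling (the $r^{-d/2}$ Jacobians on both sides cancel since $s+(1-s)=1$) recovers (1.9) in this range of $r$. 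For the microscopic regime $0<r<\varepsilon/\varepsilon_0$, rescaling into Corollary~1.4 is unavailable, but the uniform doubling gives $\|u_\varepsilon\|_{L^2(B_{2r})}\le K^{1/2}\|u_\varepsilon\|_{L^2(B_r)}$; writing the right-hand side as $K^{1/2}\|u_\varepsilon\|_{L^2(B_r)}^s\|u_\varepsilon\|_{L^2(B_r)}^{1-s}$ and bounding the second factor by $\|u_\varepsilon\|_{L^2(B_{4r})}$ produces (1.9) with any $s\in[0,1]$, in particular the one from Corollary~1.4.

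The main obstacle is the bookkeeping at the interface between the two regimes: one must fix $\varepsilon_0$ small enough that Corollary~1.4 applies after rescaling throughout $r\ge\varepsilon/\varepsilon_0$, while ensuring that the doubling threshold from \cite{MR3952693} covers the complementary microscopic range with constants depending only on $d,\mu,\tilde M,M$. If that threshold is itself unfavorable, one further rescales by $\varepsilon$ to $w(y):=u_\varepsilon(\varepsilon y)$, which solves a fixed-coefficient uniformly elliptic equation with Lipschitz coefficients on a ball of bounded radius, and a classical three-sphere inequality closes the residual gap. Matching of the exponent $s$ is automatic: in the macroscopic regime it is inherited directly from Corollary~1.4, and in the microscopic regime any $s\in[0,1]$ is admissible via the trivial doubling bound, so the two estimates paste together with a single constant depending only on $d,\mu,\tilde M,M$.
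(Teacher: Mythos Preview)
Your approach is essentially the paper's: invoke the uniform doubling condition from \cite{MR3952693} (which in fact holds for all $0<r\le 2$, not merely above a microscopic threshold, so your internal inconsistency on that point is harmless), rescale via $v(x)=u_\varepsilon(rx)$ in the macroscopic regime $r\ge\varepsilon/\varepsilon_0$, verify the growth condition (1.5) from doubling, and apply Corollary~1.4. The only minor difference is the treatment of the small-scale case $r<\varepsilon/\varepsilon_0$: the paper observes directly that after rescaling the coefficient $A(\cdot/(\varepsilon/r))$ has Lipschitz constant bounded by $\tilde M\,\varepsilon_0(M)$, so the classical Carleman/three-ball inequality for Lipschitz coefficients applies with the \emph{same} exponent $s$; your fallback of rescaling by $\varepsilon$ is equivalent (though ``fixed-coefficient'' should read ``non-oscillating Lipschitz coefficient''), and your primary route via doubling alone also works once one uses that the Lin--Shen doubling estimate covers all scales.
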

The first result about the approximate three-ball inequality was obtained by Kenig and Zhu in \cite{MR4194318} with the help of the asymptotic behavior of Green functions and the Lagrange interpolation technique, under the assumptions that the coefficient matrix $A$ is only H\"{o}lder continuous. Later on, an improvement of a sharp exponential error term (in the sense that if $A$ is only H\"{o}lder continuous, then the multiplicative factor must be  at least exponential) in the error bound for the approximate three-ball inequality (under certain extra conditions) was discovered by Armstrong, Kuusi, and Smart in \cite{armstrong2020largescale}, as a consequence of the large-scale “analyticity”. Meanwhile, an approximate two-sphere one-cylinder inequality in parabolic periodic homogenization was obtained by the first author in \cite{zhang2020approximate}, under the assumptions tha the coefficient matrix $A(x,t)$ is only H\"{o}lder continuous, with the help of the asymptotic behavior of fundamental solutions and the Lagrange interpolation technique.

Recently, the three-ball inequality without an error term was discovered by Kenig, Zhu and Zhuge in \cite{kenig2021doubling} under the assumptions that $A\in C^{0,1}$ and $u_\varepsilon$ satisfies a doubling condition at a macroscopic scale, with the help of the approximate three-ball inequality with a sharp exponential error term obtained in \cite{armstrong2020largescale}. At this stage, we should compare the three-ball inequality obtained in \cite{kenig2021doubling} with the result proved in Theorem 1.5 in this paper. The result  reads that:

\textbf{Theorem}. Assume that the coefficient matrix $A$ satisfies $(1.2)$-$(1.4)$. For every $\tau>0$, there exists $C>1$ and $\theta\in (0,1/2)$ depending only on $d$, $\mu$ and $\lambda$ such that if $u_\varepsilon$ is a weak solution of $\mathcal{L}_\varepsilon(u_\varepsilon)=0$ in $B_1$ satisfying
$$\int_{B_1}|u_\varepsilon|^2dx\leq M \int_{B_\theta}|u_\varepsilon|^2dx,$$
then for any $r\in(0,1)$,
$$\int_{B_\theta}|u_\varepsilon|^2dx\leq \exp(\exp(CM^\tau)) \int_{B_{\theta r}}|u_\varepsilon|^2dx$$
and
$$\int_{B_{\theta r}}|u_\varepsilon|^2dx\leq \exp(\exp(CM^\tau)) \left(\int_{B_{\theta^2 r}}|u_\varepsilon|^2dx\right)^{\tau_1}\left(\int_{B_{ r}}|u_\varepsilon|^2dx\right)^{1-\tau_1}$$
for any $0<\tau_1<1$. It is clear that the authors in \cite{kenig2021doubling} have found an explicit estimate for the constant $C(M)$ in the doubling condition and in the three-ball inequality, with an unknown $\theta$. However, in our Theorem 1.5, we could state $\theta$ explicitly and obtain the three-ball inequality more directly with an unknown constant $C(M)$.
Throughout this paper, with $Y=[0,1)^d\cong \mathbb{R}^d/\mathbb{Z}^d$, we use the following notation

$$H^m_{\text{per}}(Y)=:\left\{f\in H^m(Y) \text{ and }f\text{ is 1-periodic with }\fint_Yfdy=0\right\}, $$
and we will write $\partial_{x_i}$ as $\partial_i$, $F^\varepsilon=F(x/\varepsilon)$ for a function $F$ and $\mathcal{M}(G)=\int_Y G(y)dy$ for a 1-periodic function $G$ if the context is understand.
\section{Preliminaries}
Assume that $A=A(y)$ satisfies the conditions $(1.2)$-$(1.3)$. Let $\chi(y)\in H^1_{\text{per}}(Y;\mathbb{R}^d)$ denote the first order corrector for $\mathcal{L}_\varepsilon$, where $\chi_j$ for $j=1,\cdots,d$ is the unique 1-periodic function in $H^1_{\text{per}}(Y)$ such that
\begin{equation}\begin{cases}
\mathcal{L}_1(\chi_j)=-\mathcal{L}_1(y_j)\quad \text{in }Y,\\
\fint_Y \chi_jdy=0.
\end{cases}\end{equation}
By the classical Schauder estimates, $\chi\in C^{1,\alpha}$ if $A\in C^{0,\alpha}$. The homogenized operator for $\mathcal{L}_\varepsilon$ is given by $\mathcal{L}_0=-\operatorname{div}(\widehat{A}\nabla)$, where $\widehat{A}=(\widehat{a}_{ij})_{d\times d}$ and
\begin{equation}\widehat{a}_{i j}=\fint_{Y}\left[a_{i j}+a_{i k} \frac{\partial}{\partial y_{k}}\left(\chi_{j}\right)\right](y) d y.\end{equation}
It is well-known that the homogenized matrix $\widehat{A}$ also satisfies the ellipticity condition $(1.2)$ with the same $\mu$. What's more, if $A$ is symmetric, the same is also true for $\widehat{A}$. We refer the readers to \cite{MR3838419} for the proofs.

Denote the so-called flux correctors $b_{ij}$ by
\begin{equation}
b_{ij}(y)=\widehat{a}_{ij}-a_{ij}(y)-a_{ik}(y)\frac{\partial\chi_j(y)}{ \partial{y_k}},
\end{equation}
where $1\leq i,j\leq d$.
\begin{lemma}
Suppose that $A$ satisfies the conditions $(1.2)$ and $(1.3)$. For $1\leq i,j,k\leq d$, there exists $F_{ijk}\in H^1_{\text{per}}(Y)\cap L^\infty(Y)$ such that
\begin{equation}
b_{ij}=\frac{\partial}{ \partial {y_k}}F_{kij}\ \text{ and }\ F_{kij}=-F_{ikj}.
\end{equation}
\begin{proof}See \cite[Remark 2.1]{kenig2014periodic}.\end{proof}
\end{lemma}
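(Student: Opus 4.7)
The plan is to construct the flux correctors by the standard ``antisymmetric potential'' device, reducing the statement to one periodic Poisson equation per pair $(i,j)$. The two preliminary facts I would record are: (a) $\fint_Y b_{ij}\,dy=0$, which is immediate from the definition of $\widehat a_{ij}$ in (2.2), and (b) the divergence-free identity $\partial_i b_{ij}=0$ (summation over $i$), which follows by applying $\partial_i$ to the corrector equation $\partial_i(a_{ij}+a_{ik}\partial_k\chi_j)=0$ for $\chi_j$ and noting that $\widehat a_{ij}$ is a constant.

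With (a) at hand, for each pair $(i,j)$ I would solve the periodic Poisson problem
\[
\Delta f_{ij}=b_{ij}\quad\text{in }Y,\qquad \fint_Y f_{ij}\,dy=0,
\]
obtaining a unique $f_{ij}\in H^2(Y)$ with zero mean by Fourier series (or variational) methods. I then set
\[
F_{kij}:=\partial_k f_{ij}-\partial_i f_{kj},
\]
which is manifestly antisymmetric in its first two indices, i.e.\ $F_{kij}=-F_{ikj}$, and has mean zero on $Y$ since each term is a partial derivative of a periodic function. Computing
\[
\partial_k F_{kij}=\Delta f_{ij}-\partial_i(\partial_k f_{kj})=b_{ij}-\partial_i g_j,\qquad g_j:=\partial_k f_{kj},
\]
the identity $b_{ij}=\partial_k F_{kij}$ is reduced to showing $g_j\equiv 0$. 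But $\Delta g_j=\partial_k(\Delta f_{kj})=\partial_k b_{kj}=0$ by (b), so $g_j$ is a periodic harmonic function, hence constant; and the constant must vanish because $\int_Y\partial_k f_{kj}\,dy=0$ by periodicity.

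For the $L^\infty$ bound I would invoke the smoothness of $\chi_j$: under the Lipschitz assumption (1.4), classical Schauder theory on the torus gives $\chi_j\in C^{1,\alpha}_{\mathrm{per}}(Y)$, so $b_{ij}\in L^\infty(Y)$. Periodic Calder\'on--Zygmund estimates then yield $f_{ij}\in W^{2,p}_{\mathrm{per}}(Y)$ for every finite $p$, and therefore $F_{kij}\in W^{1,p}_{\mathrm{per}}(Y)\hookrightarrow L^\infty(Y)$ for $p>d$ by Morrey's embedding, which combined with the $H^2$ regularity of $f_{ij}$ gives $F_{kij}\in H^1_{\mathrm{per}}(Y)\cap L^\infty(Y)$.

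There is no serious obstacle. The only step that genuinely uses both structural properties of $b_{\cdot j}$ is the vanishing of $g_j$, which simultaneously exploits the mean-zero (for the solvability of the Poisson equation) and the divergence-free condition (to force $g_j$ harmonic). The remainder is standard linear elliptic theory on the torus, and the antisymmetrisation is rigged precisely to match $\partial_i b_{ij}=0$.
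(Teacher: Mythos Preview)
Your construction is correct and is exactly the standard antisymmetric-potential argument that the cited reference \cite{kenig2014periodic} contains; since the paper's own ``proof'' is nothing more than that citation, there is no alternative approach to compare against.

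One small remark on hypotheses: for the $L^\infty$ bound you invoke the Lipschitz condition (1.4) to get $\chi_j\in C^{1,\alpha}$ and then run Calder\'on--Zygmund plus Morrey, but the lemma as stated assumes only (1.2)--(1.3). This is harmless here because the paper imposes (1.4) globally and the cited remark in \cite{kenig2014periodic} also works under H\"older regularity of $A$. If you wanted to match the stated hypotheses exactly, the route is slightly different: De~Giorgi--Nash--Moser gives $\chi_j\in C^{0,\alpha}$ under (1.2)--(1.3) alone, and one then shows $F_{kij}\in L^\infty$ by observing that $\Delta F_{kij}=\partial_k b_{ij}-\partial_i b_{kj}$ can be put in the form $\operatorname{div}(G)$ with $G$ built from $a_{\cdot\cdot}$ and $\chi_j$ (hence bounded), after which local boundedness for $-\Delta$ with divergence-form right-hand side applies. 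Either way your argument is complete for the purposes of this paper.
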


The following theorem states the existence of $u_0$ in Theorem 1.1 and  is used to control the second term on the left hand side of $(1.6)$.
\begin{thm}
Suppose that $A$ is symmetric and satisfies the conditions $(1.2)$ and $(1.3)$. Let $u_\varepsilon\in H^1(B_3)$ be the weak solution of equation
$\mathcal{L}_\varepsilon(u_\varepsilon)=0 \text{ in }B_3.$ Then there exists $u_0\in H^1(B_{11/4})$ such that $\mathcal{L}_0(u_0)=0 \text{ in }B_{11/4}$, and
\begin{equation}
||u_\varepsilon-u_0||_{L^2(B_{11/4})}\leq C \sqrt{\varepsilon }||u_\varepsilon||_{L^2(B_3)},
\end{equation} where $C$ depends only on $d$ and $\mu$.
\end{thm}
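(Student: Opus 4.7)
The strategy is to define $u_0$ as the Dirichlet solution of the homogenized equation on a ball slightly larger than $B_{11/4}$ with boundary data borrowed from $u_\varepsilon$, and then to invoke the classical $L^2$ convergence rate of periodic homogenization.

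Concretely, I would first apply the interior Caccioppoli inequality for $\mathcal{L}_\varepsilon u_\varepsilon=0$ in $B_3$ to obtain
$$\|u_\varepsilon\|_{H^1(B_{23/8})}\le C\|u_\varepsilon\|_{L^2(B_3)},$$
with $C=C(d,\mu)$. A Fubini/coarea argument over the annulus $B_{23/8}\setminus B_{11/4}$ then produces a radius $r_0\in(11/4,23/8)$ on whose boundary sphere $u_\varepsilon$ has an $H^{1/2}$-trace bounded by $C\|u_\varepsilon\|_{L^2(B_3)}$. Let $u_0\in H^1(B_{r_0})$ be the unique weak solution of
$$\mathcal{L}_0 u_0=0 \text{ in }B_{r_0}, \qquad u_0=u_\varepsilon \text{ on }\partial B_{r_0};$$
since $B_{11/4}\subset B_{r_0}$, restricting to $B_{11/4}$ gives the desired function. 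Testing $v=u_0-u_\varepsilon\in H^{1}_{0}(B_{r_0})$ in $\int\widehat{A}\nabla u_0\cdot\nabla v=0$ together with ellipticity produces the energy bound $\|u_0\|_{H^1(B_{r_0})}\le C\|u_\varepsilon\|_{H^1(B_{r_0})}\le C\|u_\varepsilon\|_{L^2(B_3)}$.

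For the convergence rate, the same difference $w=u_\varepsilon-u_0$ lies in $H^{1}_{0}(B_{r_0})$ and satisfies $\mathcal{L}_\varepsilon w=\operatorname{div}\bigl((A^\varepsilon-\widehat{A})\nabla u_0\bigr)$. This is precisely the framework of the classical convergence-rate theorem in periodic homogenization (see Shen \cite{MR3838419}), which yields
$$\|u_\varepsilon-u_0\|_{L^2(B_{r_0})}\le C\sqrt{\varepsilon}\,\|u_0\|_{H^1(B_{r_0})}\le C\sqrt{\varepsilon}\,\|u_\varepsilon\|_{L^2(B_3)},$$
and restriction to $B_{11/4}$ completes the proof.

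The main obstacle is invoking the appropriate form of the convergence-rate theorem: because $u_0$ is only $H^1$, and the rough Dirichlet trace precludes upgrading its regularity, the cleaner $O(\varepsilon)$ interior rate is not available. Recovering the $\sqrt{\varepsilon}$ rate requires the standard duality argument together with a smooth cutoff of the first-order correctors $\chi_j^\varepsilon$ near $\partial B_{r_0}$, the loss $\sqrt{\varepsilon}$ being produced by an $L^2$-estimate over a boundary-layer strip of width $O(\varepsilon)$. The smoothness of $\partial B_{r_0}$ makes this argument apply directly, and no use is made of the Lipschitz hypothesis $(1.4)$, which explains why the constant in $(2.5)$ depends only on $d$ and $\mu$.
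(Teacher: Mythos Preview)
Your approach is essentially the same as the paper's: choose a good radius $r_0\in[11/4,23/8]$ via Caccioppoli plus coarea, solve the homogenized Dirichlet problem on $B_{r_0}$ with $u_\varepsilon$ as boundary data, and invoke a convergence-rate result from \cite{MR3838419}. The paper carries this out almost verbatim.

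There is, however, one imprecision worth flagging. You only claim an $H^{1/2}(\partial B_{r_0})$ trace for $u_\varepsilon$, and then appeal to a rate of the form $\|u_\varepsilon-u_0\|_{L^2}\le C\sqrt{\varepsilon}\,\|u_0\|_{H^1(B_{r_0})}$. That inequality is not a standard theorem: with $u_0$ merely in $H^1$, the boundary-layer term $\|\nabla u_0\|_{L^2(\{ \operatorname{dist}(x,\partial B_{r_0})<\varepsilon\})}$ need not be $O(\sqrt{\varepsilon})$, so the duality/cutoff argument you sketch in your last paragraph does not close. What the paper does---and what your coarea step actually delivers if you keep track of it---is the stronger bound
\[
\int_{\partial B_{r_0}}\bigl(|u_\varepsilon|^2+|\nabla u_\varepsilon|^2\bigr)\,dS\le C\|u_\varepsilon\|_{L^2(B_3)}^2,
\]
i.e.\ $u_\varepsilon\in H^1(\partial B_{r_0})$. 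It is precisely this $H^1$ boundary regularity (equivalently, the nontangential control of $\nabla u_0$) that feeds into the $\sqrt{\varepsilon}$ rate in \cite{MR3838419}; the paper records the estimate in the form $\|u_\varepsilon-u_0\|_{L^2(B_{r_0})}\le C\varepsilon\|\nabla u_0\|_{L^2(B_{r_0})}+C\sqrt{\varepsilon}\,\|u_\varepsilon\|_{H^1(\partial B_{r_0})}$. Once you upgrade your trace statement from $H^{1/2}$ to $H^1$ on $\partial B_{r_0}$, your argument and the paper's coincide.
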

\begin{proof}
Due to the Caccioppoli's inequality and the co-area formula, there exists $r_0\in [\frac{11}{4},\frac{23}{8}]$ such that
\begin{equation}
\int_{\partial B_{r_0}}|u_\varepsilon|^2dS+\int_{\partial B_{r_0}}| \nabla u_\varepsilon|^2dS\leq C\int_{B_3}|u_\varepsilon|^2dx.
\end{equation}Then, we could consider the following Dirichlet problem,
\begin{equation}\begin{cases}
\mathcal{L}_\varepsilon(u_\varepsilon)=0 \text{ in }B_{r_0}\\
u_\varepsilon\in H^1(\partial B_{r_0})\text{ with }||u_\varepsilon||_{H^1(\partial B_{r_0})}\leq C||u_\varepsilon||_{L^2(B_3)}.
\end{cases}\end{equation}
And let $u_0$ satisfies the following equation,
\begin{equation}\begin{cases}
\mathcal{L}_0(u_0)=0 \text{ in }B_{r_0}\\
u_0=u_\varepsilon \text{ on }\partial B_{r_0}.
\end{cases}\end{equation}
Since $A$ is symmetric, therefore, it follows from the homogenization theory that there holds (for the proof, see \cite{MR3838419} for example)
\begin{equation}\begin{aligned}
&||u_\varepsilon-u_0||_{L^2(B_{11/4})}\leq ||u_\varepsilon-u_0||_{L^2(B_{r_0})}\\
\leq &C \varepsilon || \nabla u_0||_{L^2(B_{r_0})}+C \sqrt{\varepsilon}||u_\varepsilon||_{H^1(\partial B_{r_0})}\\
\leq &C \sqrt{\varepsilon}||u_\varepsilon||_{H^1(\partial B_{r_0})}\leq C\varepsilon ||u_\varepsilon||_{L^2(B_3)},
\end{aligned}\end{equation}
where we have used the $H^1$ estimate for $u_0$ and $(2.6)$ in the third line in inequality $(2.9)$. Thus we have completed this proof.
\end{proof}
\begin{rmk}
In Theorem 2.2, if we additionally assume that $A$ is Lipschitz continuous, then there exists $u_0\in H^1(B_{11/4})$ such that $\mathcal{L}_0(u_0)=0 \text{ in }B_{11/4}$, and
\begin{equation*}
||u_\varepsilon-u_0||_{L^\infty(B_{5/2})}\leq C \varepsilon ||u_\varepsilon||_{L^2(B_3)}.
\end{equation*}The main ideal of this proof is due to Lin and Shen \cite{MR3952693}, and we omit it here.
\end{rmk}
Next, we introduce the following well-known Div-Curl lemma whose proof may be found in \cite{MR3838419}.
\begin{lemma}
Let $\{u_k\}$ and $\{v_k\}$ be two bounded sequences in $L^2(\Omega;\mathbb{R}^d)$ with $\Omega$ being a bounded Lipschitz domain.
Suppose that\\
$(i)$ $u_k\rightharpoonup u$ and $v_k\rightharpoonup v$ weakly in $L^2(\Omega;\mathbb{R}^d)$;\\
$(ii)$ $\operatorname{curl}(u_k)=0$ in $\Omega$ and $\operatorname{div}(v_k)=f$ strongly in $H^{-1}(\Omega)$.\\
Then there holds
\begin{equation*}
\int_\Omega(u_k\cdot v_k)\varphi dx\rightarrow \int_\Omega (u\cdot v)\varphi dx
\end{equation*}as $k\rightarrow \infty$, for any scalar function $\varphi\in C^1_0(\Omega)$.
\end{lemma}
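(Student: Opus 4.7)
The strategy is the standard compensated compactness argument: use the curl-free hypothesis to introduce scalar potentials for $u_k$, and then exploit the strong $H^{-1}$ convergence of $\operatorname{div}(v_k)$ paired against the weakly converging potentials in $H^1_0$.

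First I would reduce to a local situation. Since $\varphi \in C^1_0(\Omega)$ has compact support in $\Omega$, cover $\operatorname{supp}(\varphi)$ by finitely many open balls $B_i \Subset \Omega$ and choose a subordinate partition of unity $\{\varphi_i\}$ with $\varphi_i \in C^1_0(B_i)$ and $\varphi = \sum_i \varphi_i$. It suffices to prove the convergence with $\varphi$ replaced by each $\varphi_i$, so from now on I may assume $\operatorname{supp}(\varphi) \subset B$ for some ball $B \Subset \Omega$. Because $B$ is simply connected and $\operatorname{curl}(u_k)=0$ in $\Omega$, there exists $w_k \in H^1(B)$ with $\nabla w_k = u_k$ in $B$, normalized so that $\fint_B w_k\,dy = 0$. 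By Poincaré's inequality, $\|w_k\|_{H^1(B)} \leq C\|u_k\|_{L^2(B)}$, so $\{w_k\}$ is bounded in $H^1(B)$.

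Passing to a subsequence (not relabelled), I would get $w_k \rightharpoonup w$ weakly in $H^1(B)$ and, by Rellich--Kondrachov, $w_k \to w$ strongly in $L^2(B)$. Since $\nabla w_k = u_k \rightharpoonup u$ in $L^2(B;\mathbb{R}^d)$, uniqueness of weak limits forces $\nabla w = u$ in $B$. Now write, using $\nabla(w_k \varphi) = \varphi\nabla w_k + w_k \nabla\varphi$,
\begin{equation*}
\int_\Omega (u_k \cdot v_k)\varphi\,dx
= \int_\Omega v_k \cdot \nabla(w_k\varphi)\,dx - \int_\Omega w_k\, v_k \cdot \nabla\varphi\,dx.
\end{equation*}
Since $w_k\varphi \in H^1_0(\Omega)$ (its support lies in $B \Subset \Omega$), the first integral equals $-\langle \operatorname{div}(v_k), w_k\varphi\rangle_{H^{-1},H^1_0}$.

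The heart of the matter, and the main obstacle, is to pass to the limit in this duality pairing. This is the classical strong--weak compensation: $\operatorname{div}(v_k) \to f$ strongly in $H^{-1}(\Omega)$ while $w_k\varphi \rightharpoonup w\varphi$ weakly in $H^1_0(\Omega)$ (weak convergence of $w_k$ in $H^1(B)$ and continuity of multiplication by the fixed $C^1_0$ function $\varphi$), so the pairing converges to $\langle f, w\varphi\rangle$. For the remaining term, $w_k \to w$ strongly in $L^2(B)$ and $\nabla\varphi \in L^\infty$, so $w_k\nabla\varphi \to w\nabla\varphi$ strongly in $L^2$, which pairs with the weakly convergent $v_k$ to give $\int_\Omega w\, v\cdot\nabla\varphi\,dx$. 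Combining the two limits,
\begin{equation*}
\int_\Omega (u_k\cdot v_k)\varphi\,dx \longrightarrow -\langle f, w\varphi\rangle - \int_\Omega w\, v\cdot\nabla\varphi\,dx.
\end{equation*}
Reversing the integration by parts with the limit data, using $\operatorname{div}(v) = f$ in $H^{-1}$ and $\nabla w = u$, the right-hand side equals $\int_\Omega (u\cdot v)\varphi\,dx$. Finally, since every subsequence of $\{\int (u_k\cdot v_k)\varphi\}$ has a further subsequence converging to the same limit $\int (u\cdot v)\varphi$, the whole sequence converges, concluding the proof. A short remark at the end would summarize the partition-of-unity step to pass from a single ball back to the original $\varphi$.
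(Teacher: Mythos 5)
Your proof is correct: the localization by a partition of unity, the introduction of local potentials $w_k$ with $\nabla w_k=u_k$, the Rellich compactness for $w_k$, and the strong--weak pairing of $\operatorname{div}(v_k)\to f$ in $H^{-1}$ against $w_k\varphi\rightharpoonup w\varphi$ in $H^1_0$ is the standard compensated-compactness argument, and the final subsequence remark correctly upgrades the convergence to the full sequence. The paper itself does not prove this lemma but only cites Shen's book, where essentially this same potential-based argument is given, so your proposal matches the intended proof.
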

The following interior Caccioppoli's inequality with weights will be used in the proof of Theorem 1.1.
\begin{lemma}(interior Caccioppoli's inequality with weights)
Assume that $A$ satisfies the condition $(1.2)$, and $u_\varepsilon\in H^1(B_3)$ is a weak solution of $\mathcal{L}_\varepsilon(u_\varepsilon)=0$ in $B_3$. Let $0\leq s_1<s_2<s_3<s_4\leq 3$,  then there holds
\begin{equation}\begin{aligned}
\int_{B_{s_3}\backslash B_{s_2}}|\nabla u_\varepsilon|^2 e^{2\tau\varphi}dx \leq C&\left(\frac{1}{s_4-s_3}+\frac{1}{s_2-s_1}\right)^2\int_{B_{s_4}\backslash B_{s_1}}|u_\varepsilon|^2e^{2\tau\varphi}dx\\
&+C\lambda^2\tau^2\int_{B_{s_4}\backslash B_{s_1}}|x|^2|u_\varepsilon|^2\varphi^2e^{2\tau\varphi}dx,
\end{aligned}\end{equation}
where $C$ depends only on $d$ and $\mu$ and $\varphi=e^{-\lambda|x|^2}$ with $\lambda$ and $\tau$ being positive constants.
\end{lemma}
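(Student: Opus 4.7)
The plan is to carry out the standard weighted Caccioppoli argument: multiply the equation by a suitable cutoff times the weighted function $u_\varepsilon e^{2\tau\varphi}$, integrate by parts, and then absorb the gradient terms using ellipticity and Young's inequality. Concretely, I would fix a cutoff $\zeta\in C^\infty_0(B_{s_4}\setminus B_{s_1})$ with $\zeta\equiv 1$ on $B_{s_3}\setminus B_{s_2}$, $0\le\zeta\le 1$, and
\[
|\nabla\zeta|\le C\Bigl(\frac{1}{s_4-s_3}+\frac{1}{s_2-s_1}\Bigr),
\]
and test the weak formulation of $\mathcal{L}_\varepsilon u_\varepsilon=0$ against $\zeta^2 u_\varepsilon e^{2\tau\varphi}$.

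The gradient of the test function is
\[
\nabla\bigl(\zeta^2 u_\varepsilon e^{2\tau\varphi}\bigr)=\zeta^2 e^{2\tau\varphi}\nabla u_\varepsilon+2\zeta u_\varepsilon e^{2\tau\varphi}\nabla\zeta+2\tau\zeta^2 u_\varepsilon e^{2\tau\varphi}\nabla\varphi,
\]
so the weak formulation produces three terms. Using the lower ellipticity bound on the first one gives $\mu\int\zeta^2|\nabla u_\varepsilon|^2 e^{2\tau\varphi}\,dx$ on the left, while the second and third terms become error terms on the right. For the cutoff term I would apply Cauchy--Schwarz followed by Young's inequality with a small parameter to peel off a factor of $\frac{\mu}{4}\int\zeta^2|\nabla u_\varepsilon|^2 e^{2\tau\varphi}\,dx$, leaving a remainder controlled by $C|\nabla\zeta|^2|u_\varepsilon|^2$ which yields the first term on the right-hand side of $(2.10)$.

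For the weight term I would use the explicit identity $\nabla\varphi=-2\lambda x\,\varphi$, so that $|\nabla\varphi|\le 2\lambda|x|\varphi$. Another application of Cauchy--Schwarz and Young's inequality splits the cross term into a piece $\frac{\mu}{4}\int\zeta^2|\nabla u_\varepsilon|^2 e^{2\tau\varphi}\,dx$ that gets absorbed and a remainder bounded by $C\lambda^2\tau^2\int\zeta^2|x|^2|u_\varepsilon|^2\varphi^2 e^{2\tau\varphi}\,dx$, which is precisely the second term in $(2.10)$. Absorbing the two $\frac{\mu}{4}$-terms into the left-hand side and using $\zeta\equiv 1$ on $B_{s_3}\setminus B_{s_2}$ on the left and $\operatorname{supp}\zeta\subset B_{s_4}\setminus B_{s_1}$ on the right finishes the proof.

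No serious obstacle is expected: the only point that needs care is keeping track of both error terms simultaneously, i.e.\ using two separate Young's inequalities (one with weight tuned to $|\nabla\zeta|$, the other with weight tuned to $\tau|\nabla\varphi|$) so that both reabsorptions succeed with the same constant $\mu/2$ on the left-hand side. Everything else is routine and the constant $C$ depends only on $d$ and $\mu$ as claimed.
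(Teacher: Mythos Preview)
Your proposal is correct and follows essentially the same approach as the paper: choose a cutoff $\zeta$ (the paper calls it $\rho$) supported in $B_{s_4}\setminus B_{s_1}$ and equal to $1$ on $B_{s_3}\setminus B_{s_2}$, test the equation with $\zeta^2 u_\varepsilon e^{2\tau\varphi}$, and use ellipticity together with Cauchy/Young and the identity $\nabla\varphi=-2\lambda x\,\varphi$ to absorb the cross terms. The paper's write-up is terser but the argument is identical.
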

\begin{proof}
The proof is standard. Choose a cutoff function $0\leq\rho(x)\leq 1$, such that $\rho(x)=1$ if $x\in B_{s_3}\setminus B_{s_2}$ and $\rho(x)=0$ if $x\notin B_{s_4}\setminus B_{s_1}$ with $|\nabla \rho|\leq C\left(\frac{1}{s_3-s_4}+\frac{1}{s_1-s_2}\right)$. Then testing the equation $\mathcal{L}_\varepsilon(u_\varepsilon)=0$ in $B_3$ with $u_\varepsilon e^{2\tau \varphi}\rho^2$ yields that
\begin{equation}
\int_{B_3}A^\varepsilon \nabla u_\varepsilon \nabla u_\varepsilon e^{2\tau\varphi}\rho^2-2\lambda\tau \int_{B_3}A^\varepsilon\nabla u_\varepsilon\cdot x  u_\varepsilon \varphi e^{2\tau\varphi}\rho^2+2\int_{B_3}A^\varepsilon\nabla u_\varepsilon\nabla \rho u_\varepsilon \rho e^{2\tau\varphi}=0.
\end{equation}
Then, it follows from the Cauchy inequality that
\begin{equation}
\int_{B_3}|\nabla u_\varepsilon|^2\rho^2e^{2\tau\varphi}\leq C\lambda^2\tau^2 \int_{B_3}|x|^2|u_\varepsilon|^2\varphi^2\rho^2 e^{2\tau\varphi}+C \int_{B_3}|u_\varepsilon|^2|\nabla \rho|^2e^{2\tau\varphi}.
\end{equation}
Thus, we have completed this proof after noting the choice of $\rho$.
\end{proof}
At the end of this section, we introduce the following Carleman inequality for the homogenized operator $\mathcal{L}_0=-\operatorname{div}(\widehat{A}\nabla)$, whose proof may be found in \cite{MR3495389}.

\begin{prop}(Carleman inequality) Assume that $A$ is symmetric and satisfies the conditions $(1.2)$-$(1.4)$, then there exist three positive constants $C_0$ , $\lambda_0$ and $\tau_0$ that can depend only on $\mu$, such that
\begin{equation}
C_0 \int_{B_3}\left(\lambda^{4} \tau^{3} \varphi^{3} (v\tilde{\eta})^{2}+\lambda^{2} \tau \varphi|\nabla (v\tilde{\eta})|^{2}\right) e^{2 \tau \varphi} d x \leq \int_{B_3}\left[\mathcal{L}_{0} (v\tilde{\eta})\right]^{2} e^{2 \tau \varphi} d x,
\end{equation}
with $\varphi=e^{-\lambda|x|^2}$, for all $v\in H^2(B_3)$, $\tilde{\eta}\in C^\infty_0(B_3\setminus B_{1/2})$, $\lambda\geq \lambda_0$ and $\tau\geq\tau_0$.
\end{prop}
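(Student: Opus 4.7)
The plan is to establish this Carleman estimate for the homogenized operator by the classical Hörmander conjugation method, exploiting the key simplification that $\widehat{A}$ is a \emph{constant} symmetric positive definite matrix with ellipticity constant $\mu$. Set $u := v\tilde{\eta}$, which is an $H^2$ function compactly supported in $B_3\setminus B_{1/2}$, and introduce the conjugated function $w := e^{\tau\varphi} u$ and the conjugated operator $L_\tau := e^{\tau\varphi}\mathcal{L}_0 e^{-\tau\varphi}$, so that $\int |\mathcal{L}_0 u|^2 e^{2\tau\varphi}\,dx = \int |L_\tau w|^2\,dx$. A direct computation using the symmetry of $\widehat{A}$ gives
\begin{equation*}
L_\tau w = -\operatorname{div}(\widehat{A}\nabla w) + 2\tau\,\widehat{A}\nabla\varphi\cdot\nabla w + \tau\operatorname{div}(\widehat{A}\nabla\varphi)\,w - \tau^2(\widehat{A}\nabla\varphi\cdot\nabla\varphi)\,w.
\end{equation*}

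Next I would split $L_\tau = S_\tau + A_\tau$ into self-adjoint and anti-self-adjoint parts on $L^2(\mathbb{R}^d)$, namely
\begin{equation*}
S_\tau w = -\operatorname{div}(\widehat{A}\nabla w) - \tau^2(\widehat{A}\nabla\varphi\cdot\nabla\varphi)\,w,\qquad A_\tau w = 2\tau\,\widehat{A}\nabla\varphi\cdot\nabla w + \tau\operatorname{div}(\widehat{A}\nabla\varphi)\,w,
\end{equation*}
and use the Pythagorean-type identity $\int |L_\tau w|^2 dx = \|S_\tau w\|^2 + \|A_\tau w\|^2 + 2\langle S_\tau w, A_\tau w\rangle \geq 2\langle S_\tau w, A_\tau w\rangle$. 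Integration by parts on $\langle S_\tau w, A_\tau w\rangle$ is fully legitimate because $w$ is compactly supported away from both the origin and $\partial B_3$; careful bookkeeping produces a pointwise quadratic form in $\nabla w$ and $w$ of the schematic shape $\int Q_1(x;\lambda,\tau)\nabla w\cdot\nabla w\,dx + \int Q_0(x;\lambda,\tau)\,w^2\,dx$, where the cancellation of two $\tau\operatorname{div}(\widehat{A}\nabla\varphi)|\nabla w|^2$-type terms (one coming from the principal-principal commutator, one from the $\operatorname{div}$ in $A_\tau$) is crucial.

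With $\varphi = e^{-\lambda|x|^2}$ one has $\nabla\varphi = -2\lambda x\varphi$ and $\nabla^2\varphi = \varphi(-2\lambda I + 4\lambda^2 x\otimes x)$, which feeds into $Q_0$ a leading positive contribution of order $32\lambda^4\tau^3\varphi^3(\widehat{A}x\cdot x)^2$ and into $Q_1$ a leading positive contribution proportional to $\lambda^2\tau\varphi\,(\widehat{A}x)(\widehat{A}x)^T$; both are strictly positive on the annulus $\{1/2\leq|x|\leq 3\}$ thanks to $\widehat{A}x\cdot x\geq \mu|x|^2\geq \mu/4$. Competing negative terms of order $\lambda^3\tau^3\varphi^3 w^2$, $\lambda^2\tau^2\varphi^2 w^2$ and $\lambda\tau\varphi|\nabla w|^2$ are absorbed by choosing first $\lambda\geq\lambda_0(\mu)$ large and then $\tau\geq\tau_0(\mu)$ large, yielding $2\langle S_\tau w,A_\tau w\rangle \geq 2C_0 \int(\lambda^4\tau^3\varphi^3 w^2 + \lambda^2\tau\varphi|\nabla w|^2)\,dx$. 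Translating back through $|u|^2 e^{2\tau\varphi}=w^2$ and $|\nabla u|^2 e^{2\tau\varphi}\leq 2|\nabla w|^2 + 2\tau^2|\nabla\varphi|^2 w^2$, the chain-rule error contributes an extra $C\lambda^4\tau^3\varphi^3 w^2$ on the right, absorbable by readjusting $C_0$ (it matches the leading positive term in size), and completes the estimate.

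The main obstacle is the positivity step: this is essentially Hörmander's strict pseudoconvexity condition for the weight $e^{-\lambda|x|^2}$ with respect to the symbol of $\mathcal{L}_0$, and it fails for small $\lambda$ because $|x|^2$ alone is only weakly convex, but is rescued by the exponentiation, which produces the $\lambda^4\varphi^3$-size positive potential that dominates the $\lambda^3\varphi^3$-size negative cross terms uniformly in $\mu$ once $\lambda\geq\lambda_0(\mu)$. The crucial benefit of working with the homogenized operator rather than $\mathcal{L}_\varepsilon$ is that $\widehat{A}$ being constant eliminates all commutator terms involving derivatives of coefficients, which is precisely why $C_0$, $\lambda_0$, $\tau_0$ can be made to depend only on the ellipticity constant $\mu$.
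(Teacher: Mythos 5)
The paper itself gives no proof of Proposition 2.6 — it simply cites \cite{MR3495389} — so the comparison here is with the standard literature argument, and your conjugation/splitting strategy is indeed the natural route. Your algebra up to the commutator is fine: the formula for $L_\tau w$, the symmetric/antisymmetric split, and the Hessian $\nabla^2\varphi=\varphi(4\lambda^2 x\otimes x-2\lambda I)$ are all correct. The genuine gap is in the absorption step, in two places. First, the gradient part of the commutator is
$4\tau\,\widehat{A}\nabla^2\varphi\widehat{A}\nabla w\cdot\nabla w
=16\lambda^2\tau\varphi\left((\widehat{A}x)\cdot\nabla w\right)^2-8\lambda\tau\varphi\,|\widehat{A}\nabla w|^2$,
so the positive contribution is the \emph{rank-one} form $\lambda^2\tau\varphi\,(\widehat{A}x)(\widehat{A}x)^T$, which is not strictly positive on $\mathbb{R}^d$ for $d\ge 2$; in directions orthogonal to $\widehat{A}x$ the quadratic form is strictly negative, of size $\lambda\tau\varphi|\nabla w|^2$, and no choice of large $\lambda$ and then large $\tau$ absorbs it pointwise. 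The full-gradient term $\lambda^2\tau\varphi|\nabla w|^2$ on the left of $(2.13)$ therefore cannot come from the inequality $\|L_\tau w\|^2\ge 2\langle S_\tau w,A_\tau w\rangle$ alone; one must retain part of $\|S_\tau w\|^2$ (or pair $L_\tau w$ against a multiplier such as $\lambda^2\tau\varphi\,w$ and integrate by parts) to regenerate $\int\lambda^2\tau\varphi\,\widehat{A}\nabla w\cdot\nabla w$, and then re-absorb the resulting $\lambda^4\tau^3\varphi^3w^2$-size errors. This step is standard but it is missing, and as written your positivity claim for $Q_1$ is false.

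Second, your list of competing negative zeroth-order terms ($\lambda^3\tau^3\varphi^3w^2$, $\lambda^2\tau^2\varphi^2w^2$) omits the fourth-derivative term $-\tau\,\widehat{A}_{ij}\partial_{ij}\bigl(\widehat{A}_{kl}\partial_{kl}\varphi\bigr)w^2$, whose leading size is $-16\lambda^4\tau\varphi\,(\widehat{A}x\cdot x)^2w^2$. Compared with the positive term $64\lambda^4\tau^3\varphi^3(\widehat{A}x\cdot x)^2w^2$, pointwise domination requires $\tau\varphi\gtrsim 1$ on the support, i.e.\ $\tau\gtrsim e^{\lambda|x|^2}$; but the proposition asserts the estimate for all $\tau\ge\tau_0(\mu)$ and $\lambda\ge\lambda_0(\mu)$, and for fixed $\tau$ near $\tau_0$ and large $\lambda$ one has $\tau\varphi=\tau e^{-\lambda|x|^2}\ll1$ on the annulus $\{1/2\le|x|\le 3\}$. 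So the scheme ``choose $\lambda$ large, then $\tau$ large'' does not make the commutator positive uniformly in the stated parameter range; the regime $\tau\varphi\lesssim1$ has to be handled by a different mechanism (keeping $\|S_\tau w\|^2+\|A_\tau w\|^2$, a Poincar\'e-type bound exploiting that $w$ is supported in the annulus, or a more careful multiplier argument), which is exactly the delicate point the cited reference is needed for. Until these two absorptions are carried out honestly, the proof is incomplete.
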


\begin{rmk}
The Carleman inequality $(2.13)$ continues to hold for the operator $\tilde{\mathcal{L}}=-\operatorname{div}(\tilde{A}\nabla)+\mathbf{B}\cdot \nabla +c$ with symmetric $\tilde{A}$ satisfying the ellipticity condition and being Lipschitz continuous, $\mathbf{B}\in L^\infty(B_3)^d$ and $c\in L^\infty(\Omega)$, where the constants $C_0$, $\lambda_0$ and $\tau_0$ depends only on $\mu$ and the $L^\infty(B_3)$-norm of $\mathbf{B}$ and $c$.
\end{rmk}

\section{Carleman inequality}
To proceed further, we first need to calculate the term $\mathcal{L}_\varepsilon(u_\varepsilon \eta+\varepsilon\chi_j^\varepsilon\partial_j\eta u_\varepsilon)$ on the right hand side of $(1.6)$, which will be stated in the following lemma.
\begin{lemma}
Suppose that $A$ is symmetric and satisfies the conditions $(1.2)$-$(1.4)$, and let $u_\varepsilon\in H^2(B_3)$ be a solution to the equation $\mathcal{L}_\varepsilon(u_\varepsilon)=0$ in $B_3$, then there holds
\begin{equation}\begin{aligned}
&-\mathcal{L}_\varepsilon(u_\varepsilon \eta+\varepsilon\chi_j^\varepsilon\partial_j\eta u_\varepsilon)\\
=&2A^\varepsilon \nabla u_\varepsilon \nabla\eta+A^\varepsilon \nabla^2\eta u_\varepsilon+A^\varepsilon\nabla_y\chi_j^\varepsilon\nabla\partial_j\eta u_\varepsilon
+2A^\varepsilon\nabla u_\varepsilon\nabla_y\chi_j^\varepsilon\partial_j\eta\\
&+\operatorname{div}_y(A^\varepsilon\chi_j^\varepsilon)\nabla\partial_j\eta u_\varepsilon
+\varepsilon A^\varepsilon\chi_j^\varepsilon\nabla^2\partial_j\eta u_\varepsilon
+2\varepsilon A^\varepsilon\chi_j^\varepsilon\nabla\partial_j\eta \nabla u_\varepsilon\quad\text{in }L^2(B_3).
\end{aligned}\end{equation}
\end{lemma}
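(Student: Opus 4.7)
The plan is to factor the argument as $w_\varepsilon := u_\varepsilon\eta + \varepsilon\chi_j^\varepsilon\partial_j\eta\,u_\varepsilon = \psi\,u_\varepsilon$, where $\psi := \eta + \varepsilon\chi_j^\varepsilon\partial_j\eta$ (summation on $j$). Since $A$ is symmetric and $\mathcal{L}_\varepsilon = -\operatorname{div}(A^\varepsilon\nabla\,\cdot\,)$, the standard product rule reads
$$ \mathcal{L}_\varepsilon(\psi u_\varepsilon) \;=\; \psi\,\mathcal{L}_\varepsilon u_\varepsilon + u_\varepsilon\,\mathcal{L}_\varepsilon\psi - 2A^\varepsilon\nabla\psi\cdot\nabla u_\varepsilon. $$
Using $\mathcal{L}_\varepsilon u_\varepsilon = 0$ in $B_3$, the claim (3.1) reduces to computing
$$ -\mathcal{L}_\varepsilon w_\varepsilon \;=\; u_\varepsilon\operatorname{div}(A^\varepsilon\nabla\psi) + 2A^\varepsilon\nabla\psi\cdot\nabla u_\varepsilon, $$
and reading off all seven terms on the right-hand side of (3.1) from these two pieces.

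The three $\nabla u_\varepsilon$-terms on the right-hand side of (3.1) will come from the chain-rule computation
$$ \nabla\psi \;=\; \nabla\eta + (\nabla_y\chi_j)^\varepsilon\partial_j\eta + \varepsilon\chi_j^\varepsilon\nabla\partial_j\eta, $$
where I have used $\nabla\chi_j^\varepsilon = \varepsilon^{-1}(\nabla_y\chi_j)^\varepsilon$: dotting with $2A^\varepsilon\nabla u_\varepsilon$ immediately produces $2A^\varepsilon\nabla u_\varepsilon\cdot\nabla\eta$, $2A^\varepsilon\nabla u_\varepsilon\cdot\nabla_y\chi_j^\varepsilon\,\partial_j\eta$, and $2\varepsilon A^\varepsilon\chi_j^\varepsilon\nabla\partial_j\eta\cdot\nabla u_\varepsilon$. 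For the four remaining terms I expand $\operatorname{div}(A^\varepsilon\nabla\psi)$ by the Leibniz rule group by group. The \emph{regular} contributions, those in which $\partial_k$ lands either on a derivative of $\eta$ or on an explicit factor of $\varepsilon$, produce exactly $A^\varepsilon\nabla^2\eta$, $A^\varepsilon\nabla_y\chi_j^\varepsilon\cdot\nabla\partial_j\eta$, $\operatorname{div}_y(A^\varepsilon\chi_j^\varepsilon)\cdot\nabla\partial_j\eta$, and $\varepsilon A^\varepsilon\chi_j^\varepsilon\nabla^2\partial_j\eta$, which after multiplication by $u_\varepsilon$ are precisely the four non-$\nabla u_\varepsilon$ terms of (3.1).

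The only delicate point, and the one I expect to be the sole obstacle, is the disappearance of the two \emph{singular} $\varepsilon^{-1}$ contributions that remain. Differentiating the coefficients in $A^\varepsilon\nabla\eta$ produces $\varepsilon^{-1}(\partial_{y_k}a_{kj})^\varepsilon\partial_j\eta$, while differentiating them in $A^\varepsilon(\nabla_y\chi_j)^\varepsilon\partial_j\eta$ produces $\varepsilon^{-1}[\partial_{y_k}(a_{ki}\partial_{y_i}\chi_j)]^\varepsilon\partial_j\eta$. Their sum factors as $\varepsilon^{-1}[\partial_{y_k}(a_{kj}+a_{ki}\partial_{y_i}\chi_j)]^\varepsilon\partial_j\eta$, which vanishes identically by the corrector equation (2.1), i.e.\ $\mathcal{L}_1(\chi_j + y_j)=0$. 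This cancellation is exactly why the corrector piece $\varepsilon\chi_j^\varepsilon\partial_j\eta\,u_\varepsilon$ is added to $u_\varepsilon\eta$ in the first place. Finally, since $A\in C^{0,1}$ gives $\chi\in C^{1,\alpha}$ (so $\chi$ and $\nabla_y\chi$ are bounded on $Y$) and $u_\varepsilon\in H^2(B_3)$, each surviving term on the right lies in $L^2(B_3)$, and (3.1) holds in $L^2(B_3)$ as stated.
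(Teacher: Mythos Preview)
Your proof is correct and follows essentially the same route as the paper: a direct product-rule expansion in which the only nontrivial step is the cancellation of the two $\varepsilon^{-1}$ contributions via the corrector equation (2.1). The paper organizes the computation by treating $u_\varepsilon\eta$ and $\varepsilon\chi_j^\varepsilon\partial_j\eta\,u_\varepsilon$ separately and then adding (equations (3.2)--(3.4)), whereas you factor the argument as $\psi u_\varepsilon$ with $\psi=\eta+\varepsilon\chi_j^\varepsilon\partial_j\eta$ and apply the symmetric product rule once; this is a cosmetic reorganization rather than a genuinely different argument.
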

\begin{proof} Since $u_\varepsilon\in H^2(B_3)$ satisfies $\mathcal{L}_\varepsilon(u_\varepsilon)=0$ in $B_3$, then it is easy to see that
\begin{equation}\begin{aligned}
-\mathcal{L}_\varepsilon(u_\varepsilon \eta)&=\operatorname{div}(A^\varepsilon \nabla u_\varepsilon\eta+A^\varepsilon\nabla \eta u_\varepsilon)\\
&=2A^\varepsilon \nabla u_\varepsilon \nabla\eta+\frac{1}{\varepsilon}\partial_{y_i}a_{ij}^\varepsilon \partial_j \eta u_\varepsilon+A^\varepsilon \nabla^2\eta u_\varepsilon.
\end{aligned}\end{equation}
In order to cancel out the term $\frac{1}{\varepsilon}\partial_{y_i}a_{ij}^\varepsilon \partial_j \eta u_\varepsilon$, we need to consider the term $\varepsilon\chi_j^\varepsilon\partial_j\eta u_\varepsilon$. Then, in view of the definition of the first order corrector $\chi_j$ in $(2.1)$, we come to the following equality,
\begin{equation}\begin{aligned}
-\mathcal{L}_\varepsilon(\varepsilon\chi_j^\varepsilon\partial_j\eta u_\varepsilon)
=&\operatorname{div}(A^\varepsilon\nabla_y\chi_j^\varepsilon\partial_j\eta u_\varepsilon+\varepsilon A^\varepsilon\chi_j^\varepsilon\nabla\partial_j\eta u_\varepsilon+\varepsilon A^\varepsilon\chi_j^\varepsilon\partial_j\eta \nabla u_\varepsilon)\\
=&-\frac{1}{\varepsilon}\partial_{y_i}a_{ij}^\varepsilon \partial_j \eta u_\varepsilon
+A^\varepsilon\nabla_y\chi_j^\varepsilon\nabla\partial_j\eta u_\varepsilon
+2A^\varepsilon\nabla_y\chi_j^\varepsilon\partial_j\eta\nabla u_\varepsilon\\
&+\operatorname{div}_y(A^\varepsilon\chi_j^\varepsilon)\nabla\partial_j\eta u_\varepsilon
+\varepsilon A^\varepsilon\chi_j^\varepsilon\nabla^2\partial_j\eta u_\varepsilon
+2\varepsilon A^\varepsilon\chi_j^\varepsilon\nabla\partial_j\eta \nabla u_\varepsilon,
\end{aligned}\end{equation}
where we have used the following equality
$$\operatorname{div}(\varepsilon A^\varepsilon\chi_j^\varepsilon\partial_j\eta \nabla u_\varepsilon)=\varepsilon A^\varepsilon\chi_j^\varepsilon\nabla\partial_j\eta \nabla u_\varepsilon
+A^\varepsilon\nabla u_\varepsilon\nabla_y\chi_j^\varepsilon\partial_j\eta,$$

in the above equation. Consequently, we have
\begin{equation}\begin{aligned}
&-\mathcal{L}_\varepsilon(u_\varepsilon \eta+\varepsilon\chi_j^\varepsilon\partial_j\eta u_\varepsilon)\\
=&2A^\varepsilon \nabla u_\varepsilon \nabla\eta+A^\varepsilon \nabla^2\eta u_\varepsilon+A^\varepsilon\nabla_y\chi_j^\varepsilon\nabla\partial_j\eta u_\varepsilon
+2A^\varepsilon\nabla u_\varepsilon\nabla_y\chi_j^\varepsilon\partial_j\eta\\
&+\operatorname{div}_y(A^\varepsilon\chi_j^\varepsilon)\nabla\partial_j\eta u_\varepsilon
+\varepsilon A^\varepsilon\chi_j^\varepsilon\nabla^2\partial_j\eta u_\varepsilon
+2\varepsilon A^\varepsilon\chi_j^\varepsilon\nabla\partial_j\eta \nabla u_\varepsilon,
\end{aligned}\end{equation}which completes the proof of Lemma 3.1.
\end{proof}
Now we are ready to give the proof of Theorem 1.1.\\
\textbf{Proof of Theorem 1.1}. We prove the result by contraction. Suppose that there exist sequence $\{\varepsilon_k\}\subset
\mathbb{R}^+$, $\{A_k\}$ being symmetric and satisfying $(1.2)$-$(1.4)$, $\{u_k\}\subset H^2(B_3)$, $\{u_{k,0}\}\in H^1(B_{11/4})$ given by Theorem 2.2, $\{\lambda_k\}$ satisfying $\lambda_k\geq \lambda_0$ and $\{\tau_k\}$ satisfying
$\tau_0\leq\tau_k\leq C(\lambda_0,\tau_0)\frac{||u_k||_{L^2(B_3)}}{||u_k||_{L^2(B_1)}}+100\tau_0$, such that $\varepsilon_k\rightarrow 0$, and
\begin{equation}\operatorname{div}(A_k(x/\varepsilon_k)\nabla u_k)=0\text{ in }B_3,\end{equation}
\begin{equation}\int_{B_3}|u_k|^2\leq M \max\left\{\left(\int_{B_2}|u_k|^2\right)^{N_1},\left(\int_{B_2}|u_k|^2\right)^{1/N_2}\right\}
\end{equation}
\begin{equation}\operatorname{div}(\widehat{A_k}\nabla u_{k,0})=0\text{ in }B_{11/4},\end{equation}
with
\begin{equation}||u_k-u_{k,0}||_{L^2(B_{11/4})}\leq C \sqrt{\varepsilon_k} ||u_k||_{L^2(B_3)},\end{equation}
and
\begin{equation}\begin{aligned}
&\frac{C_0}{2} \int_{B_3}\left(\lambda_k^{4} \tau_k^{3} \varphi_k^{3} (u_k\eta)^{2}+\lambda_k^{2} \tau_k \varphi_k\left|\nabla \left[(u_k-\varepsilon_k\chi_{k,j}^{\varepsilon_k}\partial_j u_{k,0})\eta\right]\right|^{2}\right) e^{2 \tau_k \varphi_k} d x\\
&\quad\quad > \int_{B_3}\left[\operatorname{div}\left(A_k^{\varepsilon_k}\nabla (u_k\eta+\varepsilon_k \chi_{k.j}^{\varepsilon_k}\partial_j\eta u_k)\right)\right]^{2} e^{2 \tau_k \varphi_k} d x,
\end{aligned}\end{equation}
where $\chi_{k,j}$ is the $j$-th corrector defined in $(2.1)$ for the operator $A_k$.
Since $\widehat{A_k}$ is symmetric and bounded in $\mathbb{R}^{d\times d}$, we may assume that
\begin{equation}
\widehat{A_k}\rightarrow H\end{equation} for some symmetric matrix $H$ satisfying the ellipticity condition $(1.2)$. Due to $\lambda_k^4\varphi_k^3=\lambda_k^4e^{-3\lambda_k|x|^2}\rightarrow 0$ and $\lambda_k^2\varphi_k=\lambda_k^2e^{-\lambda_k|x|^2}\rightarrow 0$ as $\lambda_k\rightarrow \infty$ if $|x|>1/2$, then  we could assume that
\begin{equation}
\limsup_{k\rightarrow\infty}\lambda_k=\lambda_\infty<+\infty.
\end{equation}
 By multiplying a constant to $u_k$, we may assume that
\begin{equation}||u_k||_{L^2(B_3)}=1.\end{equation}
By Caccioppoli's inequality, this implies that $\{u_k\}$ is bounded in $H^1(B_r)$ for any $0<r<3$. Therefore, by passing to a subsequence  still denoted by $\{u_k\}$, we may further assume that
\begin{equation}u_k\rightharpoonup u \text{ weakly in } H^1(B_r),\end{equation}
\begin{equation}u_k\rightarrow u \text{ strongly in } L^2(B_r),\end{equation}
\begin{equation}A_k(x/\varepsilon_k)\nabla u_k\rightharpoonup F\text{ weakly in } L^2(B_r)\end{equation}
for any $0<r<3$, where $u\in H^1_{\text{loc}}(B_3)$ and $F\in L^2_{\text{loc}}(B_3)$. It follows from the theory of homogenization (see e.g. \cite{MR3838419}) that $F=H\nabla u$ and
\begin{equation}
\operatorname{div}(H\nabla u)=0 \text{ in } B_3\text{ with } ||u||_{L^2(B_3)}\leq1.
\end{equation}
If we write $\chi_{k,j}(y)$ with $j=1,\cdots,d$ as the first order corrector for $-\operatorname{div}(A_k(x/\varepsilon_k)\nabla)$, then it is easy to see that
\begin{equation}\begin{aligned}
&\operatorname{div}[A_k(x/\varepsilon_k)\nabla(u_k-u-\varepsilon_k\chi_{k,j}(x/\varepsilon_k)\partial_ju)]\\
=&\operatorname{div}(H\nabla u)-\operatorname{div}(A_k(x/\varepsilon_k)\nabla u)
-\operatorname{div}(A_k(x/\varepsilon_k)\nabla\chi_{k,j}(x/\varepsilon_k)\partial_ju)\\
&-\varepsilon_k\operatorname{div}(A_k(x/\varepsilon_k)\chi_{k,j}(x/\varepsilon_k)\nabla\partial_ju)\\
=&\operatorname{div}((H-\widehat{A_k})\nabla u)
-\operatorname{div}((A_k(x/\varepsilon_k)-\widehat{A_k}+A_k(x/\varepsilon_k)\nabla\chi_{k}(x/\varepsilon_k))\nabla u)\\
&-\varepsilon_k\operatorname{div}(A_k(x/\varepsilon_k)\chi_{k,j}(x/\varepsilon_k)\nabla\partial_ju)\\
=&\operatorname{div}((H-\widehat{A_k})\nabla u)
-\varepsilon_k\partial_{x_l}\left\{F_{k,lij}\partial^2_{ij}u\right\}
-\varepsilon_k\operatorname{div}(A_k(x/\varepsilon_k)\chi_{k,j}(x/\varepsilon_k)\nabla\partial_ju)\text{ in }B_3,
\end{aligned}\end{equation}
where $F_{k,lij}=-F_{k,ilj}\in H^1_{\text{per}}(Y)\cap L^\infty(Y)$ given by Lemma 2.1 after replacing the coefficient matrix $A$ by $A_k$ in this lemma.

Consequently, it follows from the interior $W^{1,p}$ estimate \cite{MR2463969}, $(3.10)$ and $(3.14)$ that
\begin{equation}
||\nabla(u_k-u-\varepsilon_k\chi_{k,j}(x/\varepsilon_k)\partial_ju)||_{L^p(B_r)}\rightarrow 0, \text{ for any }1<p<\infty \text{ and }r<3.
\end{equation}
Next, note that
\begin{equation}
||u_{k,0}-u||_{L^2(B_{11/4})}\leq ||u_{k,0}-u_k||_{L^2(B_{11/4})}+||u_k-u||_{L^2(B_{11/4})}\rightarrow 0, \text{ as }k\rightarrow\infty,
\end{equation} and $u_{k,0}-u$ satisfies
\begin{equation}
\operatorname{div}(\widehat{A_k}\nabla(u_{k,0}-u))=(H-\widehat{A_k})\nabla^2 u\quad \text{ in }B_{11/4}.
\end{equation}
Then, it follows from the interior $H^k$ estimates for harmonic functions and $(3.19)$ that
\begin{equation}||u_{k,0}-u||_{H^k(B_r)}\rightarrow 0, \ \forall k\in \mathbf{N}^+ \text{ and }r<11/4.\end{equation}
Therefore, there holds
\begin{equation}\begin{aligned}
||\nabla(u_k-u-\varepsilon_k\chi_{k,j}(x/\varepsilon_k)\partial_ju_{k,0})||_{L^p(B_r)}\rightarrow 0 \text{ for any }r<11/4 \text { and }1<p<\infty.
\end{aligned}\end{equation}
In view of Lemma 3.1, we have
\begin{equation}\begin{aligned}
&\operatorname{div}\left(A_k(x/\varepsilon_k)\nabla(u_k \eta+\varepsilon\chi_{k,j}(x/\varepsilon_k)\partial_j\eta u_k)\right)\\
=&2A_k^{\varepsilon_k} \nabla u_k \nabla\eta+A_k^{\varepsilon_k} \nabla^2\eta u_k+A^{\varepsilon_k}_k\nabla_y\chi_{k,j}^{\varepsilon_k}\nabla\partial_j\eta u_k
+2A_k^{\varepsilon_k}\nabla u_k\nabla_y\chi_{k,j}^{\varepsilon_k}\partial_j\eta\\
&+\operatorname{div}_y(A_k^{\varepsilon_k}\chi_{k,j}^{\varepsilon_k})\nabla\partial_j\eta u_k
+{\varepsilon_k} A_k^{\varepsilon_k}\chi_{k,j}^{\varepsilon_k}\nabla^2\partial_j\eta u_k
+2{\varepsilon_k} A_k^{\varepsilon_k}\chi_{k,j}^{\varepsilon_k}\nabla\partial_j\eta \nabla u_k.
\end{aligned}\end{equation}
It is easy to see that
\begin{equation}\begin{aligned}
&2A_k^{\varepsilon_k} \nabla u_k \nabla\eta+A_k^{\varepsilon_k} \nabla^2\eta u_k+A^{\varepsilon_k}_k\nabla_y\chi_{k,j}^{\varepsilon_k}\nabla\partial_j\eta u_k\\
&\quad\rightharpoonup 2H\nabla u\nabla\eta+H\nabla^2\eta u \text{ weakly in }L^2(B_3).
\end{aligned}\end{equation}
Since $\int_Y \nabla_y\chi_{k,j}(y)dy=0$ and $||A_k^{\varepsilon_k}\nabla u_k\nabla_y\chi_{k,j}^{\varepsilon_k}\partial_j\eta||_{L^2(B_3)}\leq C$ with $C$ independent of $k$, then it follows from the so-called div-curve Lemma (Lemma 2.4) that
\begin{equation}
A_k^{\varepsilon_k}\nabla u_k\nabla_y\chi_{k,j}^{\varepsilon_k}\partial_j\eta\rightharpoonup 0\text{ weakly in }L^2(B_3).
\end{equation}
Meanwhile, we could easily obtain the following weak convergence,
\begin{equation}\begin{aligned}
&\operatorname{div}_y(A_k^{\varepsilon_k}\chi_{k,j}^{\varepsilon_k})\nabla\partial_j\eta u_k
+{\varepsilon_k} A_k^{\varepsilon_k}\chi_{k,j}^{\varepsilon_k}\nabla^2\partial_j\eta u_k
+2{\varepsilon_k} A_k^{\varepsilon_k}\chi_{k,j}^{\varepsilon_k}\nabla\partial_j\eta \nabla u_k\\
&\quad\quad\rightharpoonup 0\text{ weakly in }L^2(B_3).
\end{aligned}\end{equation}
Consequently, combining $(3.23)$-$(3.26)$ yields that
\begin{equation}\begin{aligned}
&\operatorname{div}\left(A_k(x/\varepsilon_k)\nabla(u_k \eta+\varepsilon\chi_{k,j}(x/\varepsilon_k)\partial_j\eta u_k)\right)\\
&\rightharpoonup 2H\nabla u\nabla\eta+H\nabla^2\eta u= \operatorname{div}(H\nabla(u\eta))\text{ weakly in }L^2(B_3).
\end{aligned}\end{equation}
To proceed, we first consider that, there exists some constant $\tau_\infty>0$, such that
\begin{equation}
\limsup_{k\rightarrow\infty}\tau_k=\tau_\infty<+\infty.
\end{equation}
Then, letting $k\rightarrow \infty$ along $(3.11)$ and $(3.28)$ on the both sides of $(3.9)$ yields that
\begin{equation}\begin{aligned}
&\frac{C_0}{2} \int_{B_3}\left(\lambda_\infty^{4} \tau_\infty^{3} \varphi_\infty^{3} (u\eta)^{2}+\lambda_\infty^{2} \tau_\infty \varphi_\infty\left|\nabla \left(u\eta\right)\right|^{2}\right) e^{2 \tau_\infty \varphi_\infty} d x\\
& \geq \int_{B_3}\left[\operatorname{div}(H\nabla(u\eta))\right]^{2} e^{2 \tau_\infty \varphi_\infty} d x\\
&\geq C_0 \int_{B_3}\left(\lambda_\infty^{4} \tau_\infty^{3} \varphi_\infty^{3} (u\eta)^{2}+\lambda_\infty^{2} \tau_\infty \varphi_\infty\left|\nabla \left(u\eta\right)\right|^{2}\right) e^{2 \tau_\infty \varphi_\infty} d x,
\end{aligned}\end{equation} where we have used the Carleman inequality (Proposition 2.6) for the matrix coefficient $H$ in the last inequality in $(3.29)$ and $\varphi_\infty=e^{-\lambda_\infty |x|^2}$.
It follows from $(3.29)$ and the unique continuation for harmonic function that
\begin{equation}
u\equiv 0 \text { in }B_3,
\end{equation} which contradicts to the conditions $(3.6)$, $(3.12)$ and $(3.14)$.

Next, we consider the case \begin{equation}
\limsup_{k\rightarrow\infty}\tau_k\rightarrow+\infty.
\end{equation}In view of $\tau_0\leq\tau_k\leq C(\lambda_0,\tau_0)\frac{||u_k||_{L^2(B_3)}}{||u_k||_{L^2(B_1)}}+100\tau_0$, then it follows from $(3.12)$ that
\begin{equation*}
||u_k||_{L^2(B_1)}\rightarrow0.
\end{equation*}Then, we could obtain
\begin{equation*}
u\equiv 0 \text{ in }B_1,
\end{equation*}which implies that
\begin{equation*}
u\equiv 0 \text { in }B_3,
\end{equation*} due to the unique continuation for harmonic function. Thus leads to a contraction again.\qed

The growth condition $(3.6)$ plays an important role in compactness argument. In the following example, we could construct a counterexample without the growth condition $(3.6)$.
\begin{example}If we consider $A_k=\Delta$ for any $k\geq 1$, then there exists a sequence of harmonic functions $\{u_k\}$ such that
$$\Delta u_k=0\text{ in }B_4,\ \int_{B_3}|u_k|^2dx=1\text{ and }\int_{B_1}|u_k|^2dx\rightarrow 0 \text{ as }k\rightarrow \infty.$$
Actually, we could choose $u_k$ to be a harmonic polynomial of degree $k$ with $\int_{B_3}|u_k|^2dx=1$, then it is easy to see that $\Delta u_k=0$ in $\mathbb{R}^d$ and $\int_{B_1}|u_k|^2dx=3^{-2k}\rightarrow 0$ as $k\rightarrow \infty$. Consequently, this example shows that the growth condition $(3.6)$ is necessary in the compactness argument to guarantee the Carleman-type inequality in elliptic periodic homogenization. However, we do not know that whether a Carleman-type inequality would hold without the growth condition $(3.6)$.
\end{example}
The proof of Corollary 1.4 is standard if we have obtained the Carleman-type inequality $(1.6)$. And we give it just for completeness.

\textbf{Proof of Corollary 1.4}. We just need to consider the case $0<\varepsilon\leq \varepsilon_0$, since the three-ball inequality $(1.8)$ continues to  hold for $u_\varepsilon$ if $\varepsilon\geq \varepsilon_0$ without the growth condition $(1.5)$.
According to $(1.6)$ and the choice of the cutoff function $\eta$, we have
\begin{equation}\begin{aligned}
\frac{C_0}{2} \int_{B_2\setminus B_1}\lambda^{4} \tau^{3} \varphi^{3} |u_\varepsilon|^{2} e^{2 \tau \varphi} d x
\leq \int_{B_3}\left[\mathcal{L}_{\varepsilon} (u_\varepsilon\eta+\varepsilon \chi_j^\varepsilon\partial_j\eta u_\varepsilon)\right]^{2} e^{2 \tau \varphi} d x,
\end{aligned}\end{equation}
But, in view of Lemma 3.1, there hold
\begin{equation}
\text{supp}(\mathcal{L}_{\varepsilon} (u_\varepsilon\eta+\varepsilon \chi_j^\varepsilon\partial_j\eta u_\varepsilon))\subset
 \{1/2\leq |x|\leq 2/3\}\cup \{7/3\leq |x|\leq 5/2\}
\end{equation}
and
\begin{equation}
(\mathcal{L}_{\varepsilon} (u_\varepsilon\eta+\varepsilon \chi_j^\varepsilon\partial_j\eta u_\varepsilon))^2\leq C(|u_\varepsilon|^2+|\nabla u_\varepsilon|^2).
\end{equation}Thus, we have
\begin{equation}\begin{aligned}
\int_{B_2\setminus B_1}\lambda^{4} \tau^{3} \varphi^{3} |u_\varepsilon|^{2} e^{2 \tau \varphi} d x
&\leq C\int_{\left\{B_{2/3}\setminus B_{1/2}\right\}\cup \left\{B_{5/2}\setminus B_{7/3}\right\}}(|u_\varepsilon|^2+|\nabla u_\varepsilon|^2) e^{2 \tau \varphi} d x\\
&\leq C\int_{\left\{B_{3/4}\setminus B_{1/3}\right\}\cup \left\{B_{3}\setminus B_{9/4}\right\}}(1+\lambda^2\tau^2\varphi^2)|u_\varepsilon|^{2} e^{2 \tau \varphi} d x,
\end{aligned}\end{equation} where we have used the interior Caccioppoli's inequality with weights in Lemma 2.5 in the above inequality.
Therefore, fixing $\lambda$ and changing $\tau_0$ if necessary, $(3.35)$ implies that, for $\tau_0\leq\tau_k\leq C(\lambda_0,\tau_0)\frac{||u_k||_{L^2(B_3)}}{||u_k||_{L^2(B_1)}}+100\tau_0$,
\begin{equation}\begin{aligned}
 C\int_{B_2} |u_\varepsilon|^{2} e^{2 \tau \varphi} d x
\leq (C+1)\int_{B_1}|u_\varepsilon|^2 e^{2 \tau \varphi} d x+\int_{B_{3}\setminus B_{9/4}}|u_\varepsilon|^2 e^{2 \tau \varphi} d x.
\end{aligned}\end{equation}
In view of $\varphi=\exp\left\{-\lambda|x|^2\right\}$, then it follows from that
\begin{equation}\begin{aligned}
 C\int_{B_2} |u_\varepsilon|^{2}  d x
\leq e^{\alpha\tau}(C+1)\int_{B_1}|u_\varepsilon|^2 d x+e^{-\beta\tau}\int_{B_{3}}|u_\varepsilon|^2  d x,
\end{aligned}\end{equation}where
\begin{equation}\alpha=1-2e^{-4\lambda}\quad\text{ and }\quad\beta=2\left(e^{-4\lambda}-e^{\frac{81}{16}\lambda}\right).\end{equation}
We now temporarily introduce the following notations,
$$P=(C+1)\int_{B_1}|u_\varepsilon|^2dx,\ Q=C\int_{B_2}|u_\varepsilon|^2dx\text{ and }R=\int_{B_1}|u_\varepsilon|^2dx.$$
Then $(3.37)$ becomes
\begin{equation*}
Q\leq e^{\alpha\tau}P+e^{-\beta\tau}R, \text{ for }\tau_0\leq\tau_k\leq C(\lambda_0,\tau_0)\frac{||u_k||_{L^2(B_3)}}{||u_k||_{L^2(B_1)}}+100\tau_0.
\end{equation*}
We could choose  $C(\lambda_0,\tau_0)$ such that
$$\tilde{\tau}=\frac{\ln(R/P)}{\alpha+\beta}\leq C(\lambda_0,\tau_0)\frac{||u_k||_{L^2(B_3)}}{||u_k||_{L^2(B_1)}}.$$
If $\tilde{\tau}\geq\tau_0$, then $\tau=\tilde{\tau}$ in yields that
\begin{equation}
Q\leq 2 P^{\frac{\alpha}{\alpha+\beta}}R^{\frac{\alpha}{\alpha+\beta}}.
\end{equation}
If $\tilde{\tau}<\tau_0$, $R<e^{(\alpha+\beta)\tau_0}P$ and then
\begin{equation}
Q\leq CR=CR^{\frac{\alpha}{\alpha+\beta}}R^{\frac{\alpha}{\alpha+\beta}}\leq C e^{\alpha\tau_0}P^{\frac{\alpha}{\alpha+\beta}}R^{\frac{\alpha}{\alpha+\beta}}.
\end{equation}In conclusion, we find that in any case one of inequalities $(3.39)$ and $(3.40)$ holds. That is, in terms of the original notations,
\begin{equation}
||u_\varepsilon||_{L^2(B_2)}\leq C ||u_\varepsilon||_{L^2(B_1)}^s||u_\varepsilon||_{L^2(B_3)}^{1-s},
\end{equation} with $s=\frac{\alpha}{\alpha+\beta}$.  Note that the constant $C$, in $(3.37)$, may depend on $\lambda$, then the constant $C$, in $(3.41)$, may also depend on $\lambda$. However, if $\lambda\rightarrow\infty$, the inequality $(1.8)$, with $C=M^{1/2}$, follows directly from the growth condition $(1.5)$. Therefore, we could choose the constant $C$, in $(1.8)$, that does not depend on $\lambda$.\qed\\

We are now ready to give the proof of Theorem 1.5 with the help of Corollary 1.4 and the uniform doubling conditions proved in \cite{MR3952693}.

\textbf{Proof of Theorem 1.5}.
It follows from \cite[Thm 1.2]{MR3952693} that there holds the following uniform doubling condition for $u_\varepsilon$,
\begin{equation}
\fint_{B_r}|u_\varepsilon|^2dx\leq \tilde{C}(M)\fint_{B_{r/2}}|u_\varepsilon|^2dx
\end{equation}
for any $0<r\leq 2$, with $\tilde{C}(M)$ depends only on $d$, $\tilde{M}$, $\mu$ and $M$.

It follows from Theorem 1.1 and  Corollary 1.4 that there exists $\varepsilon_0(M)$, depending only on $d$, $\tilde{M}$, $\mu$ and $M$, such that if $v\in H^2(B_4)$ solves $\mathcal{L}_\varepsilon(v)=0$ in $B_4$ and $v$ satisfies the following doubling condition
$$\fint_{B_4}|v|^2dx\leq \tilde{C}(M)\fint_{B_2}|v|^2dx,$$
then there holds the following three-sphere inequality,
\begin{equation}
||v||_{L^2(B_2)}\leq C ||v||_{L^2(B_1)}^s||v||_{L^2(B_4)}^{1-s},
\end{equation} with $s=\frac{\alpha}{\alpha+\beta}$ for $\alpha$ and $\beta$ defined in $(3.38)$

To prove the inequality $(1.9)$, we first consider the case $(\varepsilon/\varepsilon_0(M))\leq r \leq 1/2$. Let
\begin{equation}v_\varepsilon(x)=u_\varepsilon(rx),\end{equation} then it is to check that $v_\varepsilon$ satisfies
$$\operatorname{div}\left(A\left(\frac{x}{\varepsilon/r}\right)\nabla v_\varepsilon\right)=0 \text{ in } B_4,$$
with
\begin{equation*}
\fint_{B_4}|v_\varepsilon|^2dx\leq \tilde{C}(N)\fint_{B_{4}}|v_\varepsilon|^2dx,
\end{equation*} and $(\varepsilon/r)\leq \varepsilon_0(M)$, then it follows from $(3.43)$ and $v_\varepsilon(x)=u_\varepsilon(rx)$ that

\begin{equation}||u_\varepsilon||_{L^2(B_{2r})}\leq C ||u_\varepsilon||_{L^2(B_r)}^s||u_\varepsilon||_{L^2(B_{4r})}^{1-s}.\end{equation}
Suppose now that $0<r\leq(\varepsilon/\varepsilon_0(M))$, then $\frac{\varepsilon}{r}\geq \varepsilon_0(M)^{-1}$.
Therefore, it follows from the classical theory for elliptic equation with Liphschitz coefficient matrix that $(3.45)$ also holds true, with the same exponent $s$.
Thus, we have completed this proof of Theorem 1.5.

\begin{center}{\textbf{Acknowledgements}}
\end{center}

The author thanks Prof. Zhongwei Shen and Yao Xu for helpful discussions.

\normalem\bibliographystyle{plain}{}
\bibliography{carlemanbib}

\begin{thebibliography}{10}

\bibitem{MR3932093}
Scott Armstrong, Tuomo Kuusi, and Jean-Christophe Mourrat.
\newblock {\em Quantitative stochastic homogenization and large-scale
  regularity}, volume 352.
\newblock Springer, 2019.

\bibitem{armstrong2020largescale}
Scott Armstrong, Tuomo Kuusi, and Charles Smart.
\newblock Large-scale analyticity and unique continuation for periodic elliptic
  equations.
\newblock {\em Comm. Pure Appl. Math.}, 2020.

\bibitem{MR92067}
N.~Aronszajn.
\newblock A unique continuation theorem for solutions of elliptic partial
  differential equations or inequalities of second order.
\newblock {\em J. Math. Pures Appl. (9)}, 36:235--249, 1957.

\bibitem{MR140031}
N.~Aronszajn, A.~Krzywicki, and J.~Szarski.
\newblock A unique continuation theorem for exterior differential forms on
  {R}iemannian manifolds.
\newblock {\em Ark. Mat.}, 4:417--453 (1962), 1962.

\bibitem{MR910954}
Marco Avellaneda and Fang-Hua Lin.
\newblock Compactness methods in the theory of homogenization.
\newblock {\em Comm. Pure Appl. Math.}, 40(6):803--847, 1987.

\bibitem{MR978702}
Marco Avellaneda and Fang-Hua Lin.
\newblock Compactness methods in the theory of homogenization. {II}.
  {E}quations in nondivergence form.
\newblock {\em Comm. Pure Appl. Math.}, 42(2):139--172, 1989.

\bibitem{MR2839402}
Alain Bensoussan, Jacques-Louis Lions, and George Papanicolaou.
\newblock {\em Asymptotic analysis for periodic structures}, volume 374.
\newblock American Mathematical Soc., 1978.

\bibitem{MR0000334}
T.~Carleman.
\newblock Sur un probl\`eme d'unicit\'{e} pur les syst\`emes d'\'{e}quations
  aux d\'{e}riv\'{e}es partielles \`a deux variables ind\'{e}pendantes.
\newblock {\em Ark. Mat., Astr. Fys.}, 26(17):9, 1939.

\bibitem{MR3495389}
Mourad Choulli.
\newblock {\em Applications of elliptic {C}arleman inequalities to {C}auchy and
  inverse problems}.
\newblock SpringerBriefs in Mathematics. Springer, [Cham]; BCAM Basque Center
  for Applied Mathematics, Bilbao, 2016.
\newblock BCAM SpringerBriefs.

\bibitem{escauriaza2000carleman}
Luis Escauriaza.
\newblock Carleman inequalities and the heat operator.
\newblock {\em Duke Math. J.}, 104(1):113--127, 2000.

\bibitem{escauriaza2004unique}
Luis Escauriaza and Francisco~Javier Fern\'{a}ndez.
\newblock Unique continuation for parabolic operators.
\newblock {\em Ark. Mat.}, 41(1):35--60, 2003.

\bibitem{MR794370}
David Jerison and Carlos~E. Kenig.
\newblock Unique continuation and absence of positive eigenvalues for
  {S}chr\"{o}dinger operators.
\newblock {\em Ann. of Math. (2)}, 121(3):463--494, 1985.
\newblock With an appendix by E. M. Stein.

\bibitem{MR1329546}
Vasili~Vasilievitch Jikov, Sergei~M Kozlov, and Olga~Arsenievna Oleinik.
\newblock {\em Homogenization of differential operators and integral
  functionals}.
\newblock Springer Science \& Business Media, 1994.

\bibitem{kenig2014periodic}
Carlos Kenig, Fanghua Lin, and Zhongwei Shen.
\newblock Periodic homogenization of green and neumann functions.
\newblock {\em Comm. Pure Appl. Math.}, 67(8):1219--1262, 2014.

\bibitem{MR4194318}
Carlos Kenig and Jiuyi Zhu.
\newblock Propagation of {S}mallness in {E}lliptic {P}eriodic {H}omogenization.
\newblock {\em SIAM J. Math. Anal.}, 53(1):111--132, 2021.

\bibitem{kenig2021doubling}
Carlos~E. Kenig, Jiuyi Zhu, and Jinping Zhuge.
\newblock Doubling inequalities and nodal sets in periodic elliptic
  homogenization.
\newblock {\em arXiv: 2101.04841}, 2101.

\bibitem{koch2009carleman}
Herbert Koch and Daniel Tataru.
\newblock Carleman estimates and unique continuation for second order parabolic
  equations with nonsmooth coefficients.
\newblock {\em Comm. Partial Differential Equations}, 34(4-6):305--366, 2009.

\bibitem{MR3952693}
Fanghua Lin and Zhongwei Shen.
\newblock Nodal sets and doubling conditions in elliptic homogenization.
\newblock {\em Acta Math. Sin. (Engl. Ser.)}, 35(6):815--831, 2019.

\bibitem{MR2463969}
Zhongwei Shen.
\newblock {$W^{1,p}$} estimates for elliptic homogenization problems in
  nonsmooth domains.
\newblock {\em Indiana Univ. Math. J.}, 57(5):2283--2298, 2008.

\bibitem{MR3838419}
Zhongwei Shen.
\newblock {\em Periodic homogenization of elliptic systems}.
\newblock Springer, 2018.

\bibitem{MR1081811}
Christopher~D. Sogge.
\newblock Strong uniqueness theorems for second order elliptic differential
  equations.
\newblock {\em Amer. J. Math.}, 112(6):943--984, 1990.

\bibitem{vessella2008quantitative}
Sergio Vessella.
\newblock Quantitative estimates of unique continuation for parabolic
  equations, determination of unknown time-varying boundaries and optimal
  stability estimates.
\newblock {\em Inverse Problems}, 24(2):023001, 81, 2008.

\bibitem{MR3466080}
Qiang Xu.
\newblock Uniform regularity estimates in homogenization theory of elliptic
  system with lower order terms.
\newblock {\em J. Math. Anal. Appl.}, 438(2):1066--1107, 2016.

\bibitem{zhang2020approximate}
Yiping Zhang.
\newblock Approximate two-sphere one-cylinder inequality in parabolic periodic
  homogenization.
\newblock {\em arXiv:2005.00989}, 2020.

\end{thebibliography}
\end{document}